\newcounter{cprop}[section]
\newtheorem{theorem}[cprop]{Theorem}
\theoremstyle{plain}
\newtheorem{lemma}[cprop]{Lemma}
\numberwithin{equation}{section}
\theoremstyle{definition}
\newtheorem{definition}[cprop]{Definition}
\theoremstyle{remark}
\newtheorem{remark}[cprop]{Remark}
\newcommand{\E}{\mathbb{E}}
\newcommand{\R}{\mathbb{R}}
\newcommand{\N}{\mathbb{N}}
\newcommand{\vertiii}[1]{{\left\vert\kern-0.25ex\left\vert\kern-0.25ex\left\vert #1 
    \right\vert\kern-0.25ex\right\vert\kern-0.25ex\right\vert}}
\begin{document}
\title{Rough differential equations with unbounded drift term}

\author{S. Riedel}
\address{Sebastian Riedel \\
Institut f\"ur Mathematik, Technische Universit\"at Berlin, Germany}
\email{riedel@math.tu-berlin.de}

\author{M. Scheutzow}
\address{Michael Scheutzow \\
Institut f\"ur Mathematik, Technische Universit\"at Berlin, Germany}
\email{ms@math.tu-berlin.de}

\keywords{controlled ordinary differential equations, rough paths, stochastic differential equations}

\subjclass[2010]{34A34, 34F05, 60G15, 60H10}

\begin{abstract}
  We study controlled differential equations driven by a rough path (in the sense of T. Lyons) with an additional, possibly unbounded drift term. We show that the equation induces a solution flow if the drift grows at most linearly. Furthermore, we show that the semiflow exists assuming only appropriate one-sided growth conditions. We provide bounds for both the flow and the semiflow. Applied to stochastic analysis, our results imply \emph{strong completeness} and the existence of a stochastic (semi)flow for a large class of stochastic differential equations. If the driving process is Gaussian, we can further deduce (essentially) sharp tail estimates for the (semi)flow and a Freidlin-Wentzell-type large deviation result.
\end{abstract}

\maketitle

\section*{Introduction}

T. Lyons' theory of \emph{rough paths} can be used to solve controlled ordinary differential equations (ODE) of the form
\begin{align}\label{eqn:controlled_ODE_illposed}
  \begin{split}
 dy &= b(y)\, dt + \sum_{i=1}^d \sigma_i(y)\, dx^i_t; \quad t \in [0,T] \\
 y_0 &= \xi \in \R^m
  \end{split}
\end{align}
for vector fields $b, \sigma_1, \ldots, \sigma_d \colon \R^m \to \R^m$ and non-differentiable, $1/p$-H\"older continuous paths \\ $x \colon [0,T] \to \R^d$. However, one of Lyons' key insights was that the equation \eqref{eqn:controlled_ODE_illposed} as it stands is ill-posed\footnote{More precisely, Lyons showed that the map assigning to each smooth path $x$ the solution $y$ to the ordinary differential equation \eqref{eqn:controlled_ODE_illposed} is not closable in the space of $p$-variation or $1/p$-H\"older continuous paths.} in the case of $p \geq 2$. Instead, one has to enhance the path $x \colon [0,T] \to \R^d$ with additional information (which can be interpreted as its iterated integrals) to a path $\mathbf{x}$ taking values in a larger space. Defining a suitable ($p$-variation or H\"older-type) topology on this space of paths allows to solve the corresponding ``lifted'' equation
\begin{align}\label{eqn:controlled_ODE_wellposed}
  \begin{split}
 dy &= b(y)\, dt + \sigma(y)\, d\mathbf{x}_t; \quad t \in [0,T] \\
 y_0 &= \xi \in \R^m
  \end{split}
\end{align}
uniquely in the way that the solution map (also called \emph{It\=o-Lyons map}) $\mathbf{x} \mapsto y$ is continuous. This paves way to a genuine \emph{pathwise} stochastic calculus for a huge class of (not-necessarily martingale-type) driving signals (cf. e.g. \cite[Chapter 13 - 20]{FV10} and the references therein). Rough paths theory is now well-established, and since Lyons' seminal article \cite{Lyo98}, several monographs have appeared (cf. \cite{LQ02, LCL07, FV10, FH14}) which expose the theory and its various applications. Let us also briefly mention that rough paths ideas were used by M. Hairer to solve stochastic partial differential equations (SPDE) like the KPZ-equation (\cite{Hai13}) and form an important part in his theory of \emph{regularity structures} 
(cf. \cite{Hai14} and \cite{FH14} where the link between rough paths and regularity structures is explained). 

In the present work, we aim to solve \eqref{eqn:controlled_ODE_wellposed} for a general, possibly unbounded drift term $b$ while we assume $\sigma$ to be bounded and sufficiently smooth. In the literature about rough paths, a convenient way to take care of the drift part is to regard $t \mapsto t$ as an additional (smooth) component of the rough path $\mathbf{x}$, and $b$ as another component of $\sigma$ (cf. e.g. \cite[Exercise 8.15]{FH14}). However, this yields unnecessary smoothness assumptions,
and allowing $b$ to be unbounded leads to the study of general unbounded vector fields for rough differential equations (which is a delicate topic, cf. \cite{Lej12} for a discussion). Maybe more important, the bounds for the solution $y$ which are available in this case (cf. e.g. \cite[Exercise 10.56]{FV10}) are bounds which grow exponentially in the rough path norm of $\mathbf{x}$, whereas bounded diffusion vector fields should yield polynomial bounds. The main theorems in the present paper (Theorem \ref{thm:flow_lin_growth} and Theorem \ref{thm:main_semiflow}) provide exactly the bounds expected.

A rough differential equation can be seen as a special case of a non-autonomous ordinary differential equation. Therefore, it should not come as a big surprise that such equations naturally induce continuous two parameter flows\footnote{In fact, in \cite{Bai15}, the flow is even the central object of interest and it is constructed directly, skipping the intermediate step of defining the solution to \eqref{eqn:controlled_ODE_wellposed} for a fixed initial datum $\xi$ first.} on the state space $\R^m$ (at least if all vector fields are bounded, cf. \cite{LQ98}, \cite[Section 11.2]{FV10}, \cite[Section 8.9]{FH14}). Note that this immediately implies that a stochastic differential equation (SDE) induces a stochastic flow provided the driving process has sample paths in a rough paths space (which is the case, for instance, for a Brownian motion). In particular, the SDE is \emph{strongly complete} which means that it can be solved globally on a set of full measure which does not depend on the initial condition. Note that an SDE may lack strong completeness while possessing strong solutions (in the It\=o-sense) for any initial condition. Indeed, this is even possible for $b \equiv 0$ and $\sigma$ bounded and $\mathcal{C}^{\infty}$ (but with unbounded derivatives), cf. \cite{LS11}. However, using a pathwise calculus (like rough paths theory), strong completeness is immediate.

We are interested in proving the existence of a (semi)flow induced by \eqref{eqn:controlled_ODE_wellposed} for an unbounded drift $b$. In Section \ref{sec:RDE_flows}, we 
first discuss the case of a proper flow, i.e. the case when \eqref{eqn:controlled_ODE_wellposed} can be solved forward and backward in time. In this case, it is natural to assume that $b$ should be locally Lipschitz continuous with linear growth, and in Theorem \ref{thm:flow_lin_growth} we prove the existence of the flow under these assumptions and provide quantitative bounds. More interesting might be the case when we can only expect to solve \eqref{eqn:controlled_ODE_wellposed} in one time direction, say forward in time (a typical example would be $b(\xi) = -\xi|\xi|^2$). In these situations, the best we can hope for is to prove existence of a \emph{semi}flow induced by \eqref{eqn:controlled_ODE_wellposed}. A classical condition to impose (both in the theory of ODE and SDE) is the one-sided growth condition
\begin{align}\label{eqn:one_sided_growth_intro}
 \langle b(\xi),\xi \rangle \leq C(1 + |\xi|^2)\quad \text{for all } \xi \in \R^m
\end{align}
together with a (one-sided) local Lipschitz condition. In the context of SDE driven by a $d$-dimensional Brownian motion, strong global existence and uniqueness under condition \eqref{eqn:one_sided_growth_intro} was proven in \cite{PR07}. Recently, one of the authors showed in \cite{SS16} the existence of a semiflow even for infinitely many Brownian motions under slightly stronger assumptions. Interestingly, if $m \geq 2$, imposing only \eqref{eqn:one_sided_growth_intro} is not enough to imply non-explosion for solutions to \eqref{eqn:controlled_ODE_wellposed} on a pathwise level. Indeed, a counterexample can be found in \cite[p. 43]{CHJ13} already in the case of ``additive noise'', i.e. for $\sigma$ being constant. There, the authors define an explicit vector field $b \colon \R^2 \to \R^2$ with a strong (cubic is enough) growth  in the \emph{tangential} direction only. Then, they construct an (even smooth!) path $x \colon [0, \infty) \to \R^2$ and show that the solution to \eqref{eqn:controlled_ODE_illposed}
 explodes in finite time. This suggests the need to impose an additional condition on $b$ which controls the growth in tangential direction. In the case of additive noise, it was shown in \cite{SS16} that non-explosion can be assured even for quadratic tangential growth. In this work, we impose a linear growth of the form
\begin{align}\label{eqn:tangential_growth_intro}
 \left| b(\xi) - \frac{\langle b(\xi) ,\xi \rangle \xi}{|\xi|^2} \right| \leq C(1 + |\xi|)\quad \text{for all } \xi \in \R^m.
\end{align}
Our second main result (Theorem \ref{thm:main_semiflow}) states that under the two growth conditions \eqref{eqn:one_sided_growth_intro} and \eqref{eqn:tangential_growth_intro} and a suitable local Lipschitz condition, the semiflow to \eqref{eqn:controlled_ODE_wellposed} exists. Moreover, we provide quantitative bounds which are similar to those derived for the flow in Theorem \ref{thm:flow_lin_growth}.

To illustrate our results, let us discuss some applications in stochastic analysis. 

\begin{theorem}\label{thm:appl_intro}
 Let $\sigma = (\sigma_1,\ldots,\sigma_d)$ be a collection of infinitely often differentiable vector fields on $\R^m$ where $\sigma$ and all its derivatives are bounded. Consider the stochastic differential equation
 \begin{align}\label{eqn:SDE_intro}
  dY &= (Y - |Y|^2 Y)\, dt + \sum_{i = 1}^d \sigma_i(Y)\, \circ dX^i_t(\omega); \quad t \in [0,T] \\
  Y_0 &= \xi \in \R^m 
 \end{align}
 where $X \colon [0,T] \to \R^d$ is a continuous stochastic process which can be enhanced to a process with values in the space of weakly geometric rough paths on set of full measure (this can be a semimartingale, a Gaussian process or a Markov processes, cf. \cite{FV10} for a list of examples). The equation \eqref{eqn:SDE_intro} is understood in rough paths sense or in Stratonovich sense in case of $X$ being a semimartingale and the lift is defined as in \cite[Chapter 14]{FV10}.
 
 Then the following holds:
 \begin{itemize}
  \item[(i)] The SDE \eqref{eqn:SDE_intro} is strongly complete and induces a continuous stochastic semiflow.
  \item[(ii)] If $X \colon [0,T] \to \R^d$ is a centered Gaussian process with covariance of finite $(1,\rho)$-variation for some $\rho \in [1,2)$ (cf. \cite{FGGR16} for the precise definition), the random variable $\|Y \|_{\infty;[0,T]}^{1/\rho}$ has Gaussian tails, i.e. there is a $\delta > 0$ such that
  \begin{align*}
   \E\left[ \exp(\delta \|Y \|_{\infty;[0,T]}^{2/\rho}) \right] < \infty.
  \end{align*}
  Moreover, the random variables $(Y^{\varepsilon}_t\, :\, 0\leq t \leq T)$, $Y^{\varepsilon}$ being the solution to \eqref{eqn:SDE_intro} when we replace $dX^i_t$ by $\varepsilon dX^i_t$, satisfy a Freidlin-Wentzell-type large deviation principle in the topology of uniform convergene (cf. \cite[Proposition 19.14]{FV10} for the precise formulation).

 \end{itemize}

\end{theorem}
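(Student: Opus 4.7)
The plan is to view this theorem as a direct application of Theorem \ref{thm:main_semiflow} to the drift $b(\xi) := \xi - |\xi|^2 \xi$, combined with standard Gaussian rough-paths machinery for part (ii). First I would verify the hypotheses of Theorem \ref{thm:main_semiflow}: a direct computation gives $\langle b(\xi), \xi \rangle = |\xi|^2 - |\xi|^4 \le 1 + |\xi|^2$, so \eqref{eqn:one_sided_growth_intro} holds with $C=1$; and since $b(\xi) = (1-|\xi|^2)\xi$ is radial, its tangential projection vanishes identically and \eqref{eqn:tangential_growth_intro} is trivial. Smoothness of $b$ gives any one-sided local Lipschitz condition that Theorem \ref{thm:main_semiflow} may require.

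For (i), applying Theorem \ref{thm:main_semiflow} pathwise on the full-measure set on which $X$ admits a weakly geometric rough path lift $\mathbf{X}$ produces, for each such $\omega$, a continuous semiflow $(s,t,\xi) \mapsto \phi_{s,t}(\xi,\omega)$ solving \eqref{eqn:SDE_intro}. Joint measurability of $(\omega,\xi) \mapsto \phi_{s,t}(\xi,\omega)$ follows from continuity of the It\=o-Lyons semiflow in $\mathbf{X}$, giving a genuine stochastic semiflow; strong completeness is then immediate because solutions exist globally on a single $\omega$-set that does not depend on $\xi$. In the semimartingale case, the consistency of rough and Stratonovich integration (cf.\ \cite[Chapter 17]{FV10}) identifies this semiflow with the usual Stratonovich solution.

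For (ii), I expect Theorem \ref{thm:main_semiflow} to deliver a deterministic bound of the shape $\|Y\|_{\infty;[0,T]} \le F\bigl(\vertiii{\mathbf{X}}_{p\text{-var};[0,T]}\bigr)$ with $F$ growing only polynomially; this is exactly the quantitative improvement over the exponential estimate of \cite[Exercise 10.56]{FV10} advertised in the introduction. For a centered Gaussian $X$ with covariance of finite $(1,\rho)$-variation, $\rho \in [1,2)$, the Cass-Litterer-Lyons type tail estimate applied in the $(1,\rho)$-variation Gaussian setting (cf.\ \cite{FGGR16}) shows that $\vertiii{\mathbf{X}}_{p\text{-var};[0,T]}^{1/\rho}$ has a Gaussian tail; composing with the polynomial $F$ yields the claimed exponential moment of $\|Y\|_{\infty;[0,T]}^{1/\rho}$. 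The Freidlin-Wentzell principle then follows by the contraction principle: the enhanced Gaussian process $\varepsilon \mathbf{X}$ satisfies a Schilder-type LDP in rough-path $p$-variation topology (cf.\ \cite[Proposition 19.14]{FV10}), and the It\=o-Lyons semiflow is continuous in $\mathbf{X}$ on this topology by part (i), so pushing forward the LDP to $Y^{\varepsilon}$ in uniform topology is immediate.

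The main obstacle is encapsulated entirely inside Theorem \ref{thm:main_semiflow}: the polynomial (rather than exponential) dependence of $\|Y\|_{\infty}$ on $\vertiii{\mathbf{X}}_{p\text{-var}}$ must genuinely exploit the dissipative cubic term $-|\xi|^2 \xi$, since without it one would be forced into the exponential bounds available for generic unbounded drifts and the Gaussian-tail conclusion in (ii) would fail. Once that theorem is in hand, both parts of the present theorem reduce to routine applications of Gaussian rough-paths tools.
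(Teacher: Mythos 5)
Your verification of the hypotheses of Theorem \ref{thm:main_semiflow} is correct (and more explicit than the paper's one-line assertion): the radial decomposition $b(\xi)=(1-|\xi|^2)\xi$ makes the tangential bound trivial and $\langle b(\xi),\xi\rangle = |\xi|^2-|\xi|^4 \le 1+|\xi|^2$. Part (i) and the LDP in (ii) are also handled as in the paper.

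There is, however, a genuine gap in your tail estimate in (ii). You route the bound through $\|\mathbf{X}\|_{p\text{-var}}$, expecting $\|Y\|_{\infty}\le F(\|\mathbf{X}\|_{p\text{-var}})$ with $F$ polynomial, and then claim that $\|\mathbf{X}\|_{p\text{-var}}^{1/\rho}$ has a Gaussian tail by ``Cass--Litterer--Lyons''. This is a misattribution: the Cass--Litterer--Lyons/Friz--Gess--Gulisashvili--Riedel theorem is about the greedy count $N_\delta(\mathbf{X})$, not about the $p$-variation norm. More importantly, going through $\|\mathbf{X}\|_{p\text{-var}}$ cannot give the claimed conclusion. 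The bound from Theorem \ref{thm:main_semiflow} is linear in $N_1(\omega)$, and the best one can say when converting to $\|\mathbf{X}\|_{p\text{-var}}$ is $N_1(\omega)\le \|\mathbf{X}\|_{p\text{-var}}^p$, i.e.\ a polynomial of degree $p$. Since a $\rho$-Gaussian rough path necessarily has $p>2\rho$, one would then need Gaussian tails for $\|\mathbf{X}\|_{p\text{-var}}^{p/\rho}$ with exponent $p/\rho>2>1$, which fail: $\|\mathbf{X}\|_{p\text{-var}}$ has Gaussian tails by a Fernique-type argument, but raising to a power $>1$ destroys them. The entire point of the $N_1$-machinery (and of stating the bound \eqref{eqn:sup_norm_bound_semiflow} in terms of $N_1(\omega)$ rather than $\|\mathbf{X}\|_{p\text{-var}}^p$) is that $N_1(\mathbf{X})$ has dramatically better integrability than $\|\mathbf{X}\|_{p\text{-var}}^p$: by \cite[Theorem 1.1]{FGGR16}, $N_1(\mathbf{X})^{1/\rho}$ has Gaussian tails. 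Plugging this directly into \eqref{eqn:sup_norm_bound_semiflow}, which is \emph{linear} in $N_1$, immediately gives $\E\bigl[\exp(\delta\|Y\|_{\infty}^{2/\rho})\bigr]<\infty$. Your argument as written does not, and passing to $\|\mathbf{X}\|_{p\text{-var}}$ is precisely the step you must avoid.
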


\begin{proof}
 The vector field $b(\xi) = \xi - |\xi|^2 \xi$ satisfies the conditions of Theorem \ref{thm:main_semiflow}. Therefore, \eqref{eqn:SDE_intro} can be solved pathwise which implies (i). In case of $X(\omega)$ being a Gaussian process with lift $\mathbf{X}(\omega)$, the quantity $N_1(\mathbf{X}(\omega))^{1/\rho}$ is a random variable with Gaussian tails; cf. \cite[Theorem 1.1]{FGGR16} and \cite[Theorem 11.13]{FH14}, and the bound \eqref{eqn:sup_norm_bound_semiflow} implies the tail estimate in (ii). The large deviation result follows by a Schilder-type large deviation result for $\mathbf{X}$ (cf. \cite[Theorem 15.55]{FV10}) and the \emph{contraction principle} which can be used since $\mathbf{X}(\omega) \mapsto Y(\omega)$ is continuous by Theorem \ref{thm:main_semiflow}.
\end{proof}

Let us remark that
\begin{itemize}
 \item[(i)] the smoothness assumptions for $\sigma$ can be relaxed and are linked to the ``roughness'' of the trajectories of $X$, cf. Theorem \ref{thm:main_semiflow}.
 \item[(ii)] The uniform norm in Theorem \ref{thm:appl_intro} can be replaced by the $p$-variation norm for sufficiently large $p$ (where $p$ depends on the rough path trajectories). 
 \item[(iii)] The large deviation principle also holds in $p$-variation topology (again, for $p$ large enough).
 \item[(iv)] Fractional Brownian motion with Hurst parameter $H$ falls into the framework of Theorem \ref{thm:appl_intro} with $H = 1/(2\rho)$ (other examples of Gaussian processes may be found in \cite{FGGR16}).
\end{itemize}

The article is organized as follows: In Section \ref{sec:notation}, we quickly recall some basic facts about rough paths and explain some notation. Section \ref{sec:prelim} introduces the flow decomposition (our main technique for proving our results) and some facts about flows induced by rough differential equations with bounded coefficients are proved. In Section \ref{sec:RDE_flows}, we prove our main result for $b$ having linear growth, cf. Theorem \ref{thm:flow_lin_growth}. Finally, in Section \ref{sec:RDE_semiflows} we study the case where $b$ is assumed to satisfy only one-sided growth conditions. Our main results here are formulated in Theorem \ref{thm:main_semiflow}.

\section{Notation, elements of rough path theory}\label{sec:notation}

We will now very briefly recall the elements of rough paths theory used in
this paper. For more details we refer to \cite{FV10}, \cite{LCL07}, \cite{LQ02} or \cite{FH14}. Our notation coincides with the one used in  \cite{FV10}.

Let $T^N(\R^d) =\R %
\oplus \R^d \oplus (\R^d \otimes \R^d) \oplus \ldots \oplus (\R^d)^{\otimes
N}$, be the truncated step-$N$ tensor algebra, $N \geq 1$. We are concerned with $T^N(\R^d)$-valued
paths, as naturally given by iterated integrations of $\R^d$-valued smooth paths (``lifted smooth paths").
The projection of such a path $\mathbf{x}$ on the first level is an $\R^d$-valued path and will be denoted by $\pi_1(%
\mathbf{x})$, the projection to $k$th level is denoted by $\pi_k$. Lifted smooth paths actually take values in $G^N(\R^d) \subset T^N(\R^d)$, where $(G^N(\R^d),\otimes)$ denotes the
free step-$N$ nilpotent Lie group with $d$ generators (cf. \cite[Theorem and Definition 7.30]{FV10}). The group structure allows to define natural increments $\mathbf{x}_{s,t}\equiv \mathbf{x}_{s}^{-1}\otimes \mathbf{x}_{t}$, $s,t \in \R$, for paths $\mathbf{x}$ taking values in $G^N(\R^d)$. The (left-invariant) Carnot-Caratheodory metric turns $(G^N(\R^d),d)$ into a metric space (\cite[Section 7.5.4]{FV10}).

Fix some time interval $[0,T]$. For $p \geq 1$ and $[s,t] \subseteq [0,T]$, we will use the $p$-variation and $1/p$-H\"older ``norm''

\begin{eqnarray*}
\left\Vert \mathbf{x}\right\Vert _{p\text{-var;}\left[ s,t\right] }
&=&\sup_{\left( t_{i}\right) \subset \left[ s,t\right] }\left(
\sum_{i}d\left( \mathbf{x}_{t_{i}},\mathbf{x}_{t_{i+1}}\right) ^{p}\right)
^{1/p},  \label{DefinitionDoubleBarPvarOnGroup} \\
\left\Vert \mathbf{x}\right\Vert _{1/p\text{-H\"{o}l;}\left[ s,t\right]
}  &=&   \sup_{s\leq u<v\leq t}\frac{d\left( \mathbf{x}_{u},\mathbf{x}_{v}\right) }{%
\left\vert v-u\right\vert ^{1/p}},
  \notag
\end{eqnarray*}%
and distances

\begin{eqnarray*}
d_{p%
\text{-var;}\left[ s,t\right] }(\mathbf{x},\mathbf{y}) &= & \left(
\sup_{(t_i) \subset [s,t] }\sum_{i}d\left( \mathbf{x}_{t_{i},t_{i+1}},\mathbf{y}%
_{t_{i},t_{i+1}}\right) ^{p}\right) ^{1/p}, \\
d_{1/p\text{-H\"{o}l;}\left[ s,t\right] }(\mathbf{x},\mathbf{y})
 &= & \sup_{s\leq u<v\leq t}\frac{d\left( \mathbf{x}_{u,v},\mathbf{y}%
_{u,v}\right) }{\left\vert v-u\right\vert ^{1/p}}.
  \notag
\end{eqnarray*}
To simplify notation, we will occasionally write $\| \cdot \|_{p-\text{var}} := \| \cdot \|_{p-\text{var};[0,T]}$ and $d_{p-\text{var}} := d_{p-\text{var};[0,T]}$; similar for the H\"older case.

A \emph{weak geometric $p$-rough path} is a continuous path with finite $p$-variation which takes values in $G^{\lfloor p \rfloor}(\R^d)$. A \emph{weak geometric $1/p$-H\"older rough path} is a weak geometric $p$-rough path for which its $1/p$-H\"older norm is finite (cf. \cite[Definition 9.15]{FV10}).

Recall that a \emph{control function} $\omega $ is a continuous function $\omega \colon \{ \, 0 \leq s \leq t \leq T\} \to [0,\infty)$ such that $\omega(t,t) = 0$ for every $t \in [0,T]$ and which is \emph{superadditive}, i.e.
\begin{align*}
  \omega(s,t) + \omega(t,u) \leq \omega(s,u)
\end{align*}
 holds for every $s \leq t \leq u$. Typical examples of control functions are $(s,t) \mapsto |t - s|$ and $(s,t) \mapsto \|\mathbf{x}\|_{p-\text{var};[s,t]}^p$ where $\mathbf{x}$ is any $p$-rough path. We say that \emph{$\omega$ controls the $p$-variation of $\mathbf{x}$} if $d(\mathbf{x}_s,\mathbf{x}_t)^p \leq \omega(s,t)$ holds for every $s \leq t$. Note that this is equivalent to say that $\|\mathbf{x} \|_{p-\text{var};[s,t]}^p \leq \omega(s,t)$ holds for every $s<t$. If $\mathbf{x}$ has finite $p$-variation, its $p$-variation is controlled by $\omega(s,t) = \|\mathbf{x} \|_{p-\text{var};[s,t]}^p$.
 
 For a control $\omega$ and some $\delta > 0$, we define a sequence $(\tau_n)$ as follows: set $\tau_0 := 0$ and
  \begin{align*}
   \tau_{n+1} := \inf\{ u\, :\, \omega (\tau_n,u) \geq \delta,\, \tau_n < u \leq T \} \wedge T.
  \end{align*}
  Then we define
  \begin{align*}
    N_\delta(\omega) := \sup \{ n \in \N\,:\, \tau_n < T \}.
  \end{align*}
  From superadditivity of $\omega$, $\delta N_\delta(\omega) \leq \omega(0,T) < \infty$. If $\omega(s,t) = \| \mathbf{x} \|_{p-\text{var};[s,t]}^p$ for some rough path $\mathbf{x}$, we will also write $N_\delta(\mathbf{x})$ for $N_\delta(\omega)$. The quantity $N_\delta(\omega)$ first appeared in \cite{CLL13} where the authors observed that $N_\delta(\mathbf{X})$ has significantly better integrability properties than $\| \mathbf{X} \|_{p-\text{var}}^p$ when $\mathbf{X}$ is the lift of a Gaussian stochastic process, cf. also \cite[Section 11.2]{FH14}.

A collection of vector fields $\sigma = (\sigma_1,\ldots,\sigma_d)$ on $\R^m$ is called \emph{$\gamma$-Lipschitz (in the sense of E. Stein)} for $\gamma > 0$, denoted $\sigma \in \operatorname{Lip}^{\gamma}$, if all $\sigma_i$ are $\lfloor \gamma \rfloor$-times continuously differentiable, the vector fields and all derivatives up to order $\lfloor \gamma \rfloor$ are bounded, and the $\lfloor \gamma \rfloor$-th derivatives are $(\gamma - \lfloor \gamma \rfloor)$-H\"older continuous. If $\gamma$ is an integer, this means that the $(\gamma - 1)$-th derivatives are Lipschitz continuous (cf. \cite[Definition 10.2]{FV10}). The smallest constant which bounds the supremum norm of the vector fields, its derivatives and the H\"older norm of the $\lfloor \gamma \rfloor$-th derivatives is denoted by $|\sigma|_{\operatorname{Lip}^{\gamma}}$.

We will be interested in rough differential equations of the form
\begin{align}\label{eqn:rde_with_drift}
 dy &= b(y)\, dt + \sigma(y)\, d\mathbf{x}; \quad t \in [0,T]
\end{align}
where $\mathbf{x}$ is a $p$-rough path in $G^{\lfloor p \rfloor}(\R^d)$, the solution $y$ is a continuous path in $\R^m$, $b$ and $\sigma = (\sigma_1,\ldots, \sigma_d)$ are vector fields in $\R^m$. In the following, we recall the definition of a solution to \eqref{eqn:rde_with_drift} in the sense of Friz-Victoir \cite[Definition 10.17]{FV10}:

\begin{definition}
 Let $\mathbf{x}$ be a $p$-rough path. A path $y \colon [0,T] \to \R^m$ is called a \emph{solution to \eqref{eqn:rde_with_drift} with initial condition $y_0 = \xi \in \R^m$ in the sense of Friz-Victoir} if the following holds: 
 \begin{itemize}
  \item[(i)] $y_0 = \xi$.
  \item[(ii)] There exists a sequence $(x^n)$ of continuous paths in $\R^d$ with finite variation such that the lifted paths $\mathbf{x}^n$ satisfy
 \begin{align}\label{eqn:approx_paths}
  \sup_{n \in \N} \| \mathbf{x}^n \|_{p-\text{var}} < \infty \qquad \text{and} \qquad \lim_{n \to \infty} \sup_{0 \leq s < t \leq T} d(\mathbf{x}^n_{s,t}, \mathbf{x}_{s,t}) = 0,
 \end{align}
 and that there are solutions $y^n$ to the ordinary differential equations
\begin{align*}
  \begin{split}
 d y^n_t &=  b(y^n_t)\, dt + \sigma (y^n_t)\, d x^{n}_t, \quad t \in [0,T] \\
 y^n_0 &= \xi
  \end{split}
\end{align*}
which converge uniformly to $y$ on $[0,T]$ as $n \to \infty$.
 \end{itemize}

\end{definition}

Note that by \cite[Proposition 8.12]{FV10}, for any given weak geometric $p$-rough path $\mathbf{x}$ we can find a sequence of continuous paths $(x^n)$ with finite variation for which the canonical lifts satisfy \eqref{eqn:approx_paths}.

Let $M$ be a set. A mapping $\phi \colon [0,T] \times [0,T] \times M \to M$ is called a \emph{flow} on $M$ if
\begin{itemize}
 \item[(i)] $\phi(t,t,\xi) = \xi$ for every $t \in [0,T]$ and every $\xi \in M$ and
 \item[(ii)] $\phi(s,t,\xi) = \phi(u,t,\phi(s,u,\xi))$ for every $s,u,t \in [0,T]$ and $\xi \in M$
\end{itemize}
hold. If property (ii) only holds for $s \leq u \leq t$, $\phi$ is called a \emph{semiflow}. If $M$ is a topological space and $\phi$ is jointly continuous, we speak of a \emph{continuous} (semi)flow. If $M = \R^m$, $\phi(s,t,\cdot)$ is differentiable for every $s,t \in [0,T]$ (resp. $s \leq t$) and if the derivative is jointly continuous, $\phi$ is called a \emph{continuously differentiable} (semi)flow.

For vectors $v,w \in \R^m$, $|v|$ will denote the standard $l^2$-norm and $\langle v,w \rangle$ their Euclidean scalar product. For matrices $A \in \R^{m \times m}$, $|A|$ denotes the usual operator norm for a linear map.

\section{Preliminaries}\label{sec:prelim}

Let $x \colon [0,T] \to \R^d$ be smooth and $\sigma \colon \R^m \to \operatorname{Lin}(\R^d, \R^m)$ be smooth and bounded with bounded derivatives. Let $\psi \colon [0,T] \times [0,T] \times \R^m \to \R^m$ be the solution flow to the (non-autonomous) ordinary differential equation
 \begin{align}\label{eqn:controlled_ODE_flow}
  \dot{y}_t = \sigma(y_t) \dot{x}_t.
 \end{align}
 For given $b \colon \R^m \to \R^m$, assume that we can make sense of the ordinary differential equation
 \begin{align}\label{eqn:flow_decomp_ODE}
  \begin{split}
  \dot{z}_u &=  (D_{\xi} \psi(s,u,\xi)\vert_{\xi = z_u})^{-1} b(\psi(s,u,z_u)); \quad u \in [0,T]\\
  z_s &= \xi
  \end{split}
 \end{align}
 for any $s,t \in [0,T[$ and any $\xi \in \R^m$. Let $\chi_s(t,\xi)$ denote the value of the solution to \eqref{eqn:controlled_ODE_flow} at time point $t$. Then an easy application of the chain rule shows that $\phi(s,t,\xi) := \psi(s,t,\chi_s(t,\xi))$ coincides with the solution flow to the equation
 \begin{align*}
  \dot{y}_t = b(y_t) + \sigma(y_t) \dot{x}_t.
 \end{align*}
In the cases we will consider, solutions to \eqref{eqn:flow_decomp_ODE} will only exist on small time intervals and possibly only forward in time. Taking this into account, we make the following definition:

\begin{definition}\label{def:sol_flow_rde_with_drift}
 Let 
 \begin{align*}
    \mathcal{I} \subseteq \{[s,t]\, :\, s \leq t,\, s,t \in [0,T] \}
 \end{align*}
 be a subset of the set of all intervals contained in $[0,T]$ for which there exists a finite subset $\mathcal{I}_0 \subseteq \mathcal{I}$ such that  
 \begin{align*}
  \bigcup_{[u,v] \in \mathcal{I}_0} [u,v] = [0,T]
 \end{align*}
 holds and for which $[u,v] \subset [s,t]$ and $[s,t] \in \mathcal{I}$ implies that $[u,v] \in \mathcal{I}$. Let $\mathbf{x}$ be a weak geometric $p$-rough path with values in $G^{\lfloor p \rfloor}(\R^d)$, $p \in [1,\infty)$, $b$ a vector field and $\sigma = (\sigma_1,\ldots,\sigma_d)$ a collection of vector fields on $\R^m$. Assume that the rough differential equation
 \begin{align}\label{eqn:rough_DE_flow}
  d y_t = \sigma(y_t)\, d \mathbf{x}_t; \quad t \in [0,T]
 \end{align}
 induces a continuously differentiable solution flow $\psi^{\mathbf{x}} \colon [0,T] \times [0,T] \times \R^m \to \R^m$ and that the ordinary differential equation
  \begin{align}\label{eqn:ODE_flow}
   \begin{split}
  \dot{z}_u &=  (D_{\xi} \psi^{\mathbf{x}}(v,u,\xi)\vert_{\xi = z_u})^{-1} b(\psi^{\mathbf{x}}(v,u,z_u)); \quad u \in [s,t]\\
  z_v &= \xi
    \end{split}
 \end{align}
 has a unique solution (forward and backward in time) for every $[s,t] \in \mathcal{I}$, $v \in [s,t]$ and $\xi \in \R^m$. We denote this solution by $[s,t] \ni u \mapsto \chi_v^{\mathbf{x}}(u,\xi)$. 
 Set 
 \begin{align*}
  \phi^{\mathbf{x}}(s,t,\xi) &= \psi^{\mathbf{x}}(s,t,\chi_s^{\mathbf{x}}(t,\xi)) \quad \text{and}\\
  \phi^{\mathbf{x}}(t,s,\xi) &= \psi^{\mathbf{x}}(t,s,\chi_t^{\mathbf{x}}(s,\xi))
 \end{align*}
  for $[s,t] \in \mathcal{I}$ and
 \begin{align*}
  \phi^{\mathbf{x}}(s,t,\xi) &:= \phi^{\mathbf{x}}(t_{n-1},t_n,\cdot) \circ \cdots \circ \phi^{\mathbf{x}}(t_0,t_1,\xi) \quad \text{and}\\
  \phi^{\mathbf{x}}(t,s,\xi) &:= \phi^{\mathbf{x}}(t_1,t_0,\cdot) \circ \cdots \circ \phi^{\mathbf{x}}(t_n,t_{n-1},\xi)
 \end{align*}
 for arbitrary $s \leq t$ where $s = t_0 < \ldots < t_n = t$ and $[t_i,t_{i+1}] \in \mathcal{I}$ for every $i = 0, \ldots, n-1$. Then we call the map $\phi^{\mathbf{x}} \colon [0,T] \times [0,T] \times \R^m \to \R^m$ the \emph{solution flow to \eqref{eqn:rde_with_drift}}. If the solution to \eqref{eqn:rough_DE_flow} exists only forward in time, we define $\phi(s,t,\xi)$ for $s \leq t$ and $\xi \in \R^m$ as above and call it the \emph{solution semiflow to \eqref{eqn:rde_with_drift}}.
 
\end{definition}

To simplify notation, we will sometimes drop the upper index $\mathbf{x}$ and just write $\phi$, $\psi$ and $\chi$.

We have to check that $\phi$ is well defined and does not depend on the choice of $\mathcal{I}$. This is done in the next lemma.

\begin{lemma}\label{lemma:flow_well_defined}
 Under the conditions stated in Definition \ref{def:sol_flow_rde_with_drift}, $\phi \colon [0,T] \times [0,T] \times \R^m \to \R^m$ is well defined, does not depend on the choice of $\mathcal{I}$ and satisfies the (semi-)flow property.
\end{lemma}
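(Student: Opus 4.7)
The strategy is to first establish the (semi-)flow identity on each single interval $[s,t] \in \mathcal{I}$, and then to pass to arbitrary $s,t \in [0,T]$ by refining partitions. Independence of the choice of $\mathcal{I}$ will essentially be automatic, since the local formula $\phi(s,t,\xi) = \psi(s,t,\chi_s(t,\xi))$ refers only to $\psi$, $b$, $\sigma$ and $\mathbf{x}$ and not to $\mathcal{I}$.

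The key step is the local semiflow identity: for each $[s,t] \in \mathcal{I}$ and each $s \le u \le v \le t$ one must have $\phi(s,v,\xi) = \phi(u,v,\phi(s,u,\xi))$ for all $\xi \in \R^m$. Using the cocycle property $\psi(s,v,\cdot) = \psi(u,v,\psi(s,u,\cdot))$ of the flow $\psi$, this reduces to
\begin{align*}
 \psi(s,u,\chi_s(v,\xi)) = \chi_u(v,\psi(s,u,\chi_s(u,\xi))).
\end{align*}
I would prove this by setting $f(r) := \psi(s,u,\chi_s(r,\xi))$ for $r \in [u,v]$ and differentiating in $r$. Using the chain rule together with the derivative cocycle
\begin{align*}
 D_\xi \psi(s,r,\eta) = D_\xi \psi(u,r,\psi(s,u,\eta)) \cdot D_\xi \psi(s,u,\eta),
\end{align*}
obtained by differentiating $\psi(s,r,\cdot) = \psi(u,r,\psi(s,u,\cdot))$, the factor $D_\xi \psi(s,u,\chi_s(r,\xi))$ produced by the chain rule cancels against its inverse appearing in the defining ODE \eqref{eqn:ODE_flow} of $\chi_s(\cdot,\xi)$. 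What remains is precisely \eqref{eqn:ODE_flow} for starting time $u$, so the uniqueness hypothesis in Definition \ref{def:sol_flow_rde_with_drift} forces $f(v) = \chi_u(v,\psi(s,u,\chi_s(u,\xi)))$. Running the same argument backward in time (using that \eqref{eqn:ODE_flow} exists both ways on $[s,t]$ in the flow case) gives $\phi(s,t,\cdot) \circ \phi(t,s,\cdot) = \mathrm{id}$, so that $\phi(s,t,\cdot)$ is a diffeomorphism with inverse $\phi(t,s,\cdot)$.

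Given the local identity, independence of the partition in the definition of $\phi(s,t,\cdot)$ for general $s,t$ follows by comparing any two admissible partitions through their common refinement: since $\mathcal{I}$ is closed under taking subintervals, every piece of the refinement still lies in $\mathcal{I}$, and the local identity allows one to replace each original piece by the composition over its refined sub-pieces, producing the same value from either partition. For independence of $\mathcal{I}$, the existence of a finite subcover $\mathcal{I}_0 \subseteq \mathcal{I}$ of $[0,T]$ forces every sufficiently short subinterval of $[0,T]$ to lie in $\mathcal{I}$; hence for any two admissible families $\mathcal{I}_1, \mathcal{I}_2$ one can choose a partition of $[s,t]$ all of whose pieces lie simultaneously in $\mathcal{I}_1$ and $\mathcal{I}_2$, and the previous step gives equality of the two composites. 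Finally, the global (semi-)flow identity for arbitrary $s \le u \le t$ is obtained by choosing a partition of $[s,t]$ into $\mathcal{I}$-pieces that contains $u$ as a grid point and splitting the composition at $u$.

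The main obstacle is the chain-rule computation of the second paragraph; everything else is bookkeeping with partitions. The subtle point is justifying the derivative cocycle at the regularity level of a merely continuously differentiable flow $\psi^{\mathbf{x}}$, but given the standing assumption in Definition \ref{def:sol_flow_rde_with_drift} that $\psi^{\mathbf{x}}$ is a $C^1$ flow, this is a direct differentiation of its semigroup identity and causes no difficulty.
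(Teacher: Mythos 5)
Your core argument is the same as the paper's: reduce the (semi-)flow identity to an equality between two curves which one shows both solve the ODE \eqref{eqn:ODE_flow} via the chain rule and the derivative cocycle for $\psi$, and then invoke the uniqueness hypothesis. Your formulation (prove the local identity $\phi(s,v,\xi) = \phi(u,v,\phi(s,u,\xi))$ directly for $[s,t]\in\mathcal I$, $s\le u\le v\le t$) is a clean rearrangement of the paper's version (compare two one-step decompositions $\phi(u_1,t,\phi(s,u_1,\xi))=\phi(u_2,t,\phi(s,u_2,\xi))$); the two are equivalent given subinterval closure of $\mathcal I$, and the computation you sketch, with the cancellation coming from the cocycle $D_\zeta\psi(s,r,\zeta)=D_\eta\psi(u,r,\eta)\vert_{\eta=\psi(s,u,\zeta)}\,D_\zeta\psi(s,u,\zeta)$, is exactly the one the paper carries out. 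Your common-refinement argument for partition independence is also correct, since a refinement of a single $\mathcal I$-piece has all its sub-pieces in $\mathcal I$ by the subinterval hypothesis.

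There is, however, one genuine flaw in the $\mathcal I$-independence step. You assert that the existence of a finite subcover $\mathcal I_0\subseteq\mathcal I$ of $[0,T]$ forces every sufficiently short subinterval of $[0,T]$ to lie in $\mathcal I$. This is false. Take $T=1$ and $\mathcal I=\{[s,t]:[s,t]\subseteq[0,1/2]\ \text{or}\ [s,t]\subseteq[1/2,1]\}$; this satisfies both conditions in Definition \ref{def:sol_flow_rde_with_drift} with $\mathcal I_0=\{[0,1/2],[1/2,1]\}$, yet no interval $[1/2-\varepsilon,1/2+\varepsilon]$ belongs to $\mathcal I$. The conclusion you derive from it (that one can find a partition of $[s,t]$ whose pieces lie simultaneously in $\mathcal I_1$ and $\mathcal I_2$) is nonetheless true, but you should reach it by the same device you already used: take any $\mathcal I_1$-partition $P$ and any $\mathcal I_2$-partition $Q$ of $[s,t]$ and pass to their common refinement $R=P\cup Q$; each piece of $R$ is a subinterval of a piece of $P$ and of a piece of $Q$, hence lies in $\mathcal I_1\cap\mathcal I_2$, and then $\phi^1=\phi^1_R=\phi^2_R=\phi^2$ by partition independence within each $\mathcal I_i$ and the fact that the one-piece maps depend only on $\psi$, $b$, $\mathbf x$. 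For comparison, the paper's route is shorter: it observes that $\mathcal I^1\cup\mathcal I^2$ again satisfies the hypotheses of Definition \ref{def:sol_flow_rde_with_drift}, so the flow built from the union can be computed with $\mathcal I^1$-pieces alone (hence equals $\phi^1$) and with $\mathcal I^2$-pieces alone (hence equals $\phi^2$). Either route works; yours just needs the false shortcut replaced by the refinement argument you already have in hand.
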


\begin{proof}
 We first check that $\phi$ is well defined as a semiflow. Note that it is enough to prove that
\begin{align*}
 \phi(u_1,t,\phi(s,u_1,\xi)) = \phi(u_2,t,\phi(s,u_2,\xi))
\end{align*}
holds for $s \leq u_1 \leq u_2 \leq t$, $[s,u_1]$, $[s,u_2]$, $[u_1,t]$, $[u_2,t] \in \mathcal{I}$ and $\xi \in \R^m$. From the flow property of $\psi$, this is equivalent to
\begin{align*}
 \psi(u_2,t,\psi(u_1,u_2,\chi_{u_1}(t,\phi(s,u_1,\xi)))) = \psi(u_2,t,\chi_{u_2}(t,\phi(s,u_2,\xi)).
\end{align*}
Therefore, it is enough to check that
\begin{align*}
 \psi(u_1,u_2,\chi_{u_1}(t,\phi(s,u_1,\xi))) = \chi_{u_2}(t,\phi(s,u_2,\xi).
\end{align*}
We do this by showing that both objects, seen as functions in $t$, solve the differential equation
  \begin{align}
    \dot{z}_u &=  (D_{\zeta} \psi(u_2,u,\zeta)\vert_{\xi = z_u})^{-1} b(\psi(u_2,u,z_u)); \quad u \in [u_2,t] \label{eqn:ode_well_defined} \\
  z_{u_2} &= \phi(s,u_2,\xi).
 \end{align}
 The function $ u \mapsto \chi_{u_2}(u,\phi(s,u_2,\xi)$ solves this equation by definition, and it remains to show that also the function on the left hand side solves the same equation. We first check that it has the same initial condition. We need to show that
 \begin{align*}
  \psi(u_1,u_2,\chi_{u_1}(u_2,\phi(s,u_1,\xi))) = \phi(s,u_2,\xi) = \psi(s,u_2,\chi_s(u_2,\xi)) = \psi(u_1,u_2,\psi(s,u_1, \chi_s(u_2,\xi))).
 \end{align*}
 Thus, it is enough to establish the identity
 \begin{align*}
  \chi_{u_1}(u_2,\phi(s,u_1,\xi)) = \psi(s,u_1, \chi_s(u_2,\xi)).
 \end{align*}
 This is done by showing that both expressions, seen as functions in $u_2$, solve the differential equation
  \begin{align*}
    \dot{z}_u &=  (D_{\zeta} \psi(u_1,u,\zeta)\vert_{\zeta = z_u})^{-1} b(\psi(u_1,u,z_u)); \quad u \in [u_1,u_2]\\
  z_{u_1} &= \phi(s,u_1,\xi).
 \end{align*}
 The function on the left hand side solves the equation by definition. For the function on the right hand side, we have
 \begin{align*}
  \psi(s,u_1, \chi_s(u_2,\xi)) \vert_{u_2 = u_1} = \psi(s,u_1, \chi_s(u_1,\xi)) = \phi(s,u_1,\xi),
 \end{align*}
 thus the initial condition is satisfied. Differentiating this function, using the chain rule, gives
 \begin{align*}
  \frac{d}{du} \psi(s,u_1, \chi_s(u,\xi)) &= D_{\zeta} \psi(s,u_1,\zeta)\vert_{\zeta = \chi_s(u,\xi)} \frac{d}{du} \chi_s(u,\xi) \\
  &= D_{\zeta} \psi(s,u_1,\zeta)\vert_{\zeta = \chi_s(u,\xi)} (D_{\zeta} \psi(s,u,\zeta) \vert_{\zeta = \chi_s(u,\xi)})^{-1} b(\psi(s,u,\chi_s(u,\xi))) \\
  &= D_{\zeta} \psi(s,u_1,\zeta)\vert_{\zeta = \chi_s(u,\xi)} (D_{\zeta} \psi(s,u,\zeta) \vert_{\zeta = \chi_s(u,\xi)})^{-1} b(\psi(u_1,u,\psi(s,u_1,\chi_s(u,\xi)))).
 \end{align*}
 It remains to show that
 \begin{align*}
  D_{\zeta} \psi(s,u_1,\zeta)\vert_{\zeta = \chi_s(u,\xi)} (D_{\zeta} \psi(s,u,\zeta) \vert_{\zeta = \chi_s(u,\xi)})^{-1} = (D_{\zeta} \psi(u_1,u,\zeta)\vert_{\zeta = \psi(s,u_1, \chi_s(u,\xi))})^{-1}.
 \end{align*}
 This identity follows by differentiating both sides of $\psi(s,u,\theta) = \psi(u_1,u,\psi(s,u_1,\theta))$ with respect to $\theta$ and substituting $\theta = \chi_s(u,\xi)$. Going back our proof, we see that we still have to show that $u \mapsto \psi(u_1,u_2,\chi_{u_1}(u,\phi(s,u_1,\xi)))$ satisfies \eqref{eqn:ode_well_defined}, but this is done exactly as above. It follows that $\phi$ is indeed well defined. The semiflow property follows by definition. 
 
 Next, we show that $\phi$ does not depend on $\mathcal{I}$. Let $\phi^1$ and $\phi^2$ be two semiflows associated to $\mathcal{I}^1$ resp. $\mathcal{I}^2$. Note that $\mathcal{I} := \mathcal{I}^1 \cup \mathcal{I}^2$ satisfies the same conditions as $\mathcal{I}^1$ and $\mathcal{I}^2$. The semiflow $\phi$ associated to $\mathcal{I}$ can be constructed by using only elements in $\mathcal{I}^1$, therefore it coincides with $\phi^1$. By the same argument, it also coincides with $\phi^2$, thus $\phi^1 = \phi^2$.
 
 Proving that $\phi$ is well defined as a flow follows exactly in the same way.
\end{proof}

In the next lemma, we collect some properties of the flow $\psi$ and the inverse of its derivative.

\begin{lemma}\label{lemma:prop_flows}
 Let $\mathbf{x}$ be a weak geometric $p$-rough path on $[0,T]$ and let $\omega$ be a control function which controls its $p$-variation. Let $\sigma \in \operatorname{Lip}^{\gamma}$ for some $\gamma > p$ and choose $\nu \geq |\sigma|_{\operatorname{Lip}^{\gamma}}$. Consider the equation
 \begin{align}\label{eqn:rough_DE_flow_2}
  d y_t = \sigma(y_t)\, d \mathbf{x}_t; \qquad t \in [0,T].
 \end{align}
 \begin{itemize}
  \item[(i)] The solution flow $\psi$ to \eqref{eqn:rough_DE_flow_2} exists and there is a constant $C = C(\gamma,p)$ such that
  \begin{align*}
   |\psi(s,v,\xi) - \psi(s,u,\xi)| \leq C( \nu \omega(u,v)^{1/p} \vee \nu^p \omega(u,v)) 
  \end{align*}
  and
  \begin{align*}
   |\psi(s,v,\xi) - \psi(s,u,\xi) - \psi(s,v,\zeta) + \psi(s,u,\zeta)| \leq C \nu \omega(u,v)^{1/p}|\xi - \zeta| \exp(C \nu^p \omega(s,t)) 
  \end{align*}
  holds for every $s \leq u \leq v \leq t$, $s,t \in [0,T]$ and $\xi, \zeta \in \R^m$.
  
  \item[(ii)] For every $s,t \in [0,T]$, $J(s,t,\xi) := (D_{\xi} \psi(s,t,\xi))^{-1}$ exists and satisfies the bound
  \begin{align*}
   |J(s,t,\xi) - I_m| \leq C \nu \omega(s,t)^{1/p} \exp(C \nu^p \omega(s,t))
  \end{align*}
  for every $s \leq t$, $s,t \in [0,T]$ and $\xi \in \R^m$ where $I_m$ denotes the identity matrix in $\R^{m \times m}$ and $C$ as in (i).

 \end{itemize}

\end{lemma}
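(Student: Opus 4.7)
The plan is to deduce both assertions from standard a priori estimates for rough differential equations with bounded $\operatorname{Lip}^\gamma$ coefficients, as developed in \cite[Chapters 10--11]{FV10}. Since $\sigma \in \operatorname{Lip}^\gamma$ with $\gamma > p$ and is bounded together with all its derivatives, the RDE \eqref{eqn:rough_DE_flow_2} admits a unique global solution for every initial condition, and the solution depends continuously differentiably on the starting point. This produces the flow $\psi$ and yields the existence statement in (i); a uniqueness argument combined with the standard local Picard-type estimate then gives the first increment bound.

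More concretely, for the first bound in (i), I would invoke the classical local estimate for RDE solutions with bounded $\operatorname{Lip}^\gamma$ fields: on every interval $[u,v]$ with $\nu^p \omega(u,v) \leq 1$ one has $|\psi(s,v,\xi)-\psi(s,u,\xi)| \leq C \nu\, \omega(u,v)^{1/p}$, while on intervals where $\nu^p\omega(u,v) > 1$ the local estimate can be iterated on sub-intervals of unit $\nu^p\omega$-length (and superadditivity of $\omega$) to yield $|\psi(s,v,\xi)-\psi(s,u,\xi)| \leq C\nu^p\omega(u,v)$. Packaging both regimes gives the stated $\vee$ bound.

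For the second estimate in (i), the idea is to apply the classical Lipschitz-in-initial-condition estimate $|\psi(s,v,\xi)-\psi(s,v,\zeta)| \leq C|\xi-\zeta|\exp(C\nu^p\omega(s,v))$ together with the observation that the difference $\psi(s,\cdot,\xi)-\psi(s,\cdot,\zeta)$ itself satisfies an affine RDE whose vector field has norm controlled by $\nu$. Running the local increment estimate on this difference picks up an extra factor $\nu\,\omega(u,v)^{1/p}$ on the sub-interval $[u,v]$ on top of the Lipschitz contraction, which gives exactly the double-difference bound.

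For (ii), I would use that the Jacobian $M_t := D_\xi \psi(s,t,\xi)$ and its inverse $J(s,t,\xi)$ satisfy the variational RDEs $dM_t = D\sigma(\psi(s,t,\xi))\, M_t\, d\mathbf{x}_t$ with $M_s=I_m$ and $dJ_t = -J_t\, D\sigma(\psi(s,t,\xi))\, d\mathbf{x}_t$ with $J_s=I_m$ (cf.\ \cite[Chapter 11]{FV10}). Both equations are linear with coefficients bounded by $\nu$, so classical linear-RDE bounds give $|J(s,t,\xi)| \leq \exp(C\nu^p\omega(s,t))$; applying the local increment estimate to $J$, one obtains $|J(s,t,\xi)-I_m| \leq C\nu\,\omega(s,t)^{1/p}\exp(C\nu^p\omega(s,t))$. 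The main obstacle is purely bookkeeping---chasing the constants so that the single factor $\exp(C\nu^p\omega)$ dominates both the linear-RDE growth bound and the local increment estimate---but no new ideas beyond \cite{FV10} are needed.
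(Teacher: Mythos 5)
Your treatment of part (i) is essentially what the paper does: both bounds are exactly the statements of \cite[Theorem 10.14]{FV10} and \cite[Theorem 10.26]{FV10} (restarted at time $s$ rather than $0$), and the paper simply cites them, so your sketch of the local estimate plus iteration over sub-intervals of unit $\nu^p\omega$-mass and the Lipschitz-in-initial-condition argument matches. For part (ii), however, you take a genuinely different route. You pass through the variational (linearized) RDEs $dM = D\sigma(\psi)M\,d\mathbf{x}$ and $dJ = -J\,D\sigma(\psi)\,d\mathbf{x}$ and invoke linear-RDE growth and increment estimates; this works, but it requires the additional machinery identifying $D_\xi\psi$ with the solution of the variational equation and the well-posedness theory for linear (hence unbounded) rough vector fields. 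The paper avoids all of that: it takes $\zeta = \xi + he_i$ and $u=s$ in the double-difference estimate of (i), divides by $h$ and lets $h\to 0$ to get $|D_\xi\psi(s,t,\xi)-I_m|\le C\nu\omega(s,t)^{1/p}\exp(C\nu^p\omega(s,t))$ column by column; it then observes that $\psi^{-1}$ solves the same RDE driven by the time-reversed path (whose $p$-variation is unchanged), so the identical bound holds for $D_\zeta\psi^{-1}$, and concludes via the chain-rule identity $J(s,t,\xi)=D_\zeta\psi^{-1}(s,t,\zeta)\vert_{\zeta=\psi(s,t,\xi)}$ --- note it never inverts the matrix $D_\xi\psi$ nor solves an equation for $J$. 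Your approach is the more standard one and generalizes to situations where one genuinely needs the Jacobian flow as an RDE solution; the paper's is lighter, needing only the existence of the derivative (from \cite[Proposition 11.11]{FV10}) plus the estimates already established in (i). Two small points to tighten if you carry out your version: the difference of two RDE solutions does not literally solve an affine RDE (the rigorous route is the system/pair argument behind \cite[Theorem 10.26]{FV10}), and in the regime $\nu^p\omega(s,t)>1$ you should check explicitly that the exponential factor absorbs the crude bound $|J|+1$ so that the prefactor $\nu\omega(s,t)^{1/p}$ can be kept.
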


\begin{proof}
 Claim (i) is a slight generalization of \cite[Theorem 10.14 and Theorem 10.26]{FV10} when we start the equation at time point $s$ instead of $0$. 
 
 Concerning claim (ii), note first that the derivative $D_{\xi} \psi(s,t,\xi)$ and the derivative of the inverse map $D_{\xi} \psi^{-1}(s,t,\xi)$ exist by \cite[Proposition 11.11]{FV10}. Fix $s$ and $t$. From (i), note that for every $h > 0$ we have
  \begin{align*}
   |\psi(s,t,\xi + h e_i) - \psi(s,t,\xi) - h e_i  | \leq h C \nu \omega(s,t)^{1/p} \exp(C \nu^p \omega(s,t)).
  \end{align*}
  Dividing the equation by $h$ and sending $h \to 0$ shows that
  \begin{align*}
    | \partial_{\xi_i} \psi(s,t,\xi) - e_i  | \leq C \nu \omega(s,t)^{1/p} \exp(C \nu^p \omega(s,t))
  \end{align*}
  for every $i = 1,\ldots, m$, thus
  \begin{align*}
   | D_{\xi} \psi(s,t,\xi) - I_m  | \leq C \nu \omega(s,t)^{1/p} \exp(C \nu^p \omega(s,t)).
  \end{align*}
  The inverse flow $\psi^{-1}$ is given by solving \eqref{eqn:rough_DE_flow_2} where we let the rough path $\mathbf{x}$ run ``backwards in time'' from $t$ to $s$ (cf. \cite[Section 11.2]{FV10}). Note that the $p$-variation is invariant under time reversion. Therefore, the same estimate holds for $\psi$ replaced by $\psi^{-1}$.
  From the chain rule,
  \begin{align*}
   J(s,t,\xi) = (D_{\xi}\psi(s,t,\xi))^{-1} = D_{\zeta} \psi^{-1}(s,t,\zeta)\vert_{\zeta = \psi(s,t,\xi)}
  \end{align*}
  from which we can deduce claim (ii).

\end{proof}

\section{RDE flows for a drift with linear growth}\label{sec:RDE_flows}

The next theorem is our first main result.

\begin{theorem}\label{thm:flow_lin_growth}
 Let $\mathbf{x}$ be a weak geometric $p$-rough path with values in $G^{\lfloor p \rfloor}(\R^d)$, $p \in [1,\infty)$, and let $\omega$ be a control function which controls its $p$-variation. Assume that $\sigma = (\sigma_1,\ldots,\sigma_d)$ is a collection of $\operatorname{Lip}^{\gamma + 1}$-vector fields on $\R^m$ for some $\gamma > p$ and choose $\nu \geq |\sigma|_{\operatorname{Lip}^{\gamma}}$. Moreover, assume that $b$ is a locally Lipschitz continuous vector field with linear growth on $\R^m$, i.e. there are some constants $\kappa_1, \kappa_2 \geq 0$ such that
 \begin{align*}
  |b(\xi)| \leq \kappa_1 + \kappa_2|\xi| \quad \text{for all } \xi \in \R^m.
 \end{align*}
 Then the following holds true:
 \begin{enumerate}
  \item The solution flow $\phi$ to  \eqref{eqn:rde_with_drift} exists and is continuous.
  \item There is a constant $C$ depending on $p$, $\gamma$, $\kappa_1$, $\kappa_2$ and $\nu$ such that
  \begin{align}\label{eqn:sup_norm_bound}
   \| \phi(0,\cdot,\xi)\|_{\infty;[0,T]} \leq C \exp(2 \kappa_2 T)(1 + N_1(\omega) + |\xi| + T)
  \end{align}
  and
  \begin{align}\label{eqn:p-var_bound}
    \begin{split}
   \| \phi(0,\cdot,\xi)  \|_{p-\text{var};[0,T]} \leq C \exp(C \kappa_2 T)(1 + N_1(\omega) + \kappa_2 |\xi| + T)
    \end{split}
  \end{align}
  hold for every $\xi \in \R^m$. If $\mathbf{x}$ is a $1/p$-H\"older rough path,
  \begin{align}\label{eqn:1/p_hoelder_bound}
    \sup_{0 \leq s < t \leq T} \frac{|\phi(0,t,\xi) - \phi(0,s,\xi)|}{|t-s| \vee |t - s|^{1/p}} \leq C \exp(C \kappa_2 T)(1 + \kappa_2 |\xi| +  \| \mathbf{x} \|_{1/p-\text{H\"ol};[0,T]} \vee \| \mathbf{x} \|_{1/p-\text{H\"ol};[0,T]}^p).
  \end{align}

  \item For every initial condition $\xi \in \R^m$, the path $t \mapsto \phi(0,t,\xi)$ is a solution to \eqref{eqn:rde_with_drift} in the sense of Friz-Victoir.
 \end{enumerate}

\end{theorem}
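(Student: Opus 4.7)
The plan is to build the flow through the decomposition $\phi(s,t,\xi) = \psi(s,t,\chi_s(t,\xi))$ introduced in Section~\ref{sec:prelim}, where $\psi$ is the flow of the drift-free RDE controlled by $\mathbf{x}$. Lemma \ref{lemma:prop_flows} gives $\psi$ and the inverse Jacobian $J(s,u,\xi) = (D_{\xi}\psi(s,u,\xi))^{-1}$, together with the estimate
\begin{align*}
|J(s,u,\xi) - I_m| \leq C\nu\,\omega(s,u)^{1/p} \exp(C\nu^p \omega(s,u)).
\end{align*}
Choose a threshold $\delta_0 = \delta_0(p,\gamma,\nu) \in (0,1]$ small enough that $|J(s,u,\xi)| \leq 2$ and $|\psi(s,u,\zeta)-\zeta| \leq 1$ whenever $\omega(s,u) \leq \delta_0$. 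Set
$\mathcal{I} := \{[s,t]\subseteq [0,T] : \omega(s,t) \leq \delta_0\}$, which admits a finite cover of $[0,T]$ of size $\lesssim N_1(\omega)+1$ by the definition of $N_\delta(\omega)$.

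On any $[s,t]\in \mathcal{I}$ and any $v\in[s,t]$, the ODE \eqref{eqn:ODE_flow} has right-hand side $F(u,z) = J(v,u,z)\,b(\psi(v,u,z))$ which is locally Lipschitz in $z$ (since $\sigma \in \mathrm{Lip}^{\gamma+1}$ gives $\psi$ of class $\mathcal{C}^1$ in the initial datum and $b$ is locally Lipschitz) and satisfies the linear bound
\begin{align*}
|F(u,z)| \leq 2\bigl(\kappa_1 + \kappa_2(|z|+1)\bigr).
\end{align*}
So $\chi_v^{\mathbf{x}}(\cdot,\xi)$ exists on the full interval $[s,t]$, both forward and backward in time, and Gronwall yields $|\chi_v^{\mathbf{x}}(u,\xi)| \leq (|\xi| + C_1)e^{C\kappa_2|u-v|}$ with $C_1$ depending on $\kappa_1,\nu,p,\gamma$. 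Combining with Lemma \ref{lemma:prop_flows}(i) gives a similar bound for $\phi^{\mathbf{x}}(s,u,\xi)$ on $[s,t]$. This verifies the hypotheses of Definition \ref{def:sol_flow_rde_with_drift}, and Lemma \ref{lemma:flow_well_defined} then produces the global flow $\phi$.

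To obtain \eqref{eqn:sup_norm_bound} and \eqref{eqn:p-var_bound}, pick a partition $0=\tau_0<\dots<\tau_n=T$ with $n \lesssim N_1(\omega)+1$ and $\omega(\tau_i,\tau_{i+1}) \leq \delta_0$, and iterate the one-step estimate. Writing $\xi_i := \phi(0,\tau_i,\xi)$, the local Gronwall estimate gives
\begin{align*}
|\xi_{i+1}| \leq \bigl(|\xi_i| + C_2(1 + \kappa_1(\tau_{i+1}-\tau_i))\bigr)\exp(C\kappa_2(\tau_{i+1}-\tau_i)),
\end{align*}
and unfolding the recursion yields \eqref{eqn:sup_norm_bound}. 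For the $p$-variation bound, on each $[\tau_i,\tau_{i+1}]$ I decompose
\begin{align*}
|\phi(0,v,\xi)-\phi(0,u,\xi)| \leq |\psi(\tau_i,v,\chi(v))-\psi(\tau_i,u,\chi(v))| + |\psi(\tau_i,u,\chi(v))-\psi(\tau_i,u,\chi(u))|,
\end{align*}
bound the first term by Lemma \ref{lemma:prop_flows}(i) and the second by the Lipschitz estimate of the same lemma together with the $1$-Lipschitz-in-time bound on $\chi$ (which is $\lesssim \kappa_1+\kappa_2\sup|\chi|$); raising to the $p$-th power and summing, then chaining over the $n$ subintervals via the elementary inequality $\|y\|_{p\text{-var};[0,T]}^p \leq n \max_i \|y\|_{p\text{-var};[\tau_i,\tau_{i+1}]}^p$ gives \eqref{eqn:p-var_bound}. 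The Hölder bound \eqref{eqn:1/p_hoelder_bound} follows by the same decomposition, noting that the $\omega$-increment is replaced by $\|\mathbf{x}\|_{1/p\text{-H\"ol}}^p |t-s|$ and the $\chi$-increment remains of order $|t-s|$.

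Continuity of $\phi$ in $(s,t,\xi)$ follows from joint continuity of $\psi$ in all its arguments (Lemma \ref{lemma:prop_flows}(i) and classical RDE continuity) combined with continuous dependence of $\chi$ on its initial data and on the parameters $\psi,J$, propagated across the finite cover. For part (3), I approximate $\mathbf{x}$ by lifts $\mathbf{x}^n$ of smooth paths $x^n$ satisfying \eqref{eqn:approx_paths} (available by \cite[Prop.~8.12]{FV10}): by continuity of RDEs, $\psi^n \to \psi$ and $J^n \to J$ uniformly on compacts, hence $\chi^n \to \chi$ by standard ODE stability, and the chain-rule identity of Section \ref{sec:prelim} identifies $\phi^n(0,\cdot,\xi)$ with the classical ODE solution $y^n$ driven by $b$ and $x^n$. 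The uniform bounds from part (2) ensure $y^n \to \phi(0,\cdot,\xi)$ uniformly, realizing $\phi(0,\cdot,\xi)$ as a Friz--Victoir solution. The main obstacle throughout is to keep the constants in the iterated estimates from collapsing into an exponential in the rough-path norm; this is precisely why the decomposition $\phi = \psi\circ \chi$ is crucial, as it confines the exponential dependence to $\kappa_2 T$ (arising from the scalar ODE for $\chi$) while the rough-path contribution enters only polynomially through $N_1(\omega)$.
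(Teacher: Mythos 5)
Your overall strategy is the same as the paper's: the decomposition $\phi=\psi\circ\chi$, localization to intervals where $\omega$ is at most a fixed $\delta$, Gronwall on each piece, and induction over the $O(N_1(\omega)+1)$ pieces. The construction of the flow, the sup-norm bound, continuity and part (3) are all argued essentially as in the paper and are fine (the paper additionally uses $\tilde\omega(s,t)=\omega(s,t)+|t-s|$ so that the time-increments $|\tau_{i+1}-\tau_i|$ are also controlled, but your variant of keeping $\kappa_1(\tau_{i+1}-\tau_i)$ explicit works just as well).

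There is, however, a genuine problem in your derivation of \eqref{eqn:p-var_bound}. The ``elementary inequality'' $\|y\|_{p\text{-var};[0,T]}^p\le n\max_i\|y\|_{p\text{-var};[\tau_i,\tau_{i+1}]}^p$ is false: take $y_t=t$ on $[0,n]$ with $\tau_i=i$ and $p>1$; then the left side is $n^p$ while the right side is $n$. The correct subadditivity is $\|y\|_{p\text{-var};[0,T]}\le\sum_i\|y\|_{p\text{-var};[\tau_i,\tau_{i+1}]}$ (no $p$-th powers), and even the resulting $n\max_i$ bound would not suffice: since each per-interval estimate contains a term $C\kappa_2(1+\|\phi\|_\infty)|\tau_{i+1}-\tau_i|$ and $\|\phi\|_\infty$ is itself of order $N_1(\omega)$, multiplying the worst interval by $n\approx N_1(\omega)$ produces a bound \emph{quadratic} in $N_1(\omega)$, whereas \eqref{eqn:p-var_bound} is linear. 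You must sum the per-interval bounds and use that $\sum_i|\tau_{i+1}-\tau_i|=T$, so the $\|\phi\|_\infty$-dependent contribution is multiplied by $T$ rather than by $N$; only the $\delta^{1/p}$ terms accumulate proportionally to $N$. A second, smaller gap: for \eqref{eqn:1/p_hoelder_bound} the local estimate (valid only when $\|\mathbf{x}\|_{1/p\text{-H\"ol}}^p|v-u|\le\delta$) does not by itself give the global statement; for $u,v$ with $\|\mathbf{x}\|_{1/p\text{-H\"ol}}^p|v-u|>\delta$ one must chain over $\approx|v-u|/h$ subintervals of length $h=\delta\|\mathbf{x}\|_{1/p\text{-H\"ol}}^{-p}$, which is exactly what produces the $\|\mathbf{x}\|_{1/p\text{-H\"ol}}\vee\|\mathbf{x}\|_{1/p\text{-H\"ol}}^p$ and the $|t-s|\vee|t-s|^{1/p}$ structure in the statement; your one-line reduction omits this step.
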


\begin{remark}
 In case $b$ is bounded, we can choose $\kappa_2 = 0$. In this case, the estimates we obtain in Theorem \ref{thm:flow_lin_growth} are (essentially) sharp and (basically) coincide with those derived in \cite[Theorem 10.14]{FV10}. 
\end{remark}

\begin{proof}[Proof of Theorem \ref{thm:flow_lin_growth}]
In the following proof, $C$ will be a constant which may depend on $p$, $\gamma$, $\kappa_1$, $\kappa_2$ and $\nu$ but whose actual value may change from line to line. 

From our assumptions on $\sigma$, we know from \cite[Proposition 11.11]{FV10} that the solution flow $\psi$ to \eqref{eqn:rough_DE_flow} exists, is twice differentiable and has a twice differentiable inverse. For $s,t \in [0,T]$ and $\xi \in \R^m$, set 
 \begin{align*}
  J(s,t,\xi) := (D_{\xi} \psi(s,t,\xi))^{-1} \in \R^{m \times m}.
 \end{align*}
  The same proposition states that the maps $(t,\xi)  \mapsto D_{\xi} \psi(0,t,\xi)$ and $(t,\xi) \mapsto D_{\xi}^2 \psi(0,t,\xi)$ are bounded, and the same is true for the inverse $\psi^{-1}$. By the flow property, also the two time parameter flow maps $(s,t,\xi)  \mapsto D_{\xi} \psi(s,t,\xi)$ and $(s,t,\xi) \mapsto D_{\xi}^2 \psi(s,t,\xi)$ are bounded, and the same holds for $\psi^{-1}$. From the chain rule, 
  $(s,t,\xi)  \mapsto J(s,t,\xi)$ and $(s,t,\xi)  \mapsto D_{\xi} J(s,t,\xi)$ are also bounded. Hence the map $(s,t,\xi) \mapsto J(s,t,\xi) b(\psi(s,t,\xi))$ is continuous, locally Lipschitz continuous in space and grows at most linearly in space. Thus the ordinary differential equation \eqref{eqn:ODE_flow} has unique solutions on every time interval, forward and backward in time. Therefore $\chi$ (defined as in Definition \ref{def:sol_flow_rde_with_drift}) exists and the flow $\phi$ is well defined by Lemma \ref{lemma:flow_well_defined}.
  
  We proceed by proving the bound \eqref{eqn:sup_norm_bound} for the sup-norm of $\phi$.
  From Lemma \ref{lemma:prop_flows}, we know that there is a constant $C = C(p,\gamma)$ such that
  \begin{align*}
   \sup_{\xi \in \R^m} |\psi(s,t,\xi) - \xi| \leq C(\nu \omega(s,t)^{1/p} \vee \nu^p \omega(s,t))
  \end{align*}
  and
  \begin{align*}
   \sup_{\xi \in \R^m} |J(s,t,\xi) - I_m| \leq C \nu \omega(s,t)^{1/p} \exp(C \nu^p \omega(s,t))
  \end{align*}
  hold for every $s < t$. Choose $\delta \in (0,1]$ small enough such that
  \begin{align}\label{eqn:delta_cond}
   2(\kappa_1 + \kappa_2)\delta \vee [C \nu \delta^{1/p} \exp(C \nu^p \delta)] \vee [C (\nu \delta^{1/p} \vee \nu^p \delta)] \leq 1
  \end{align}
  and set $\tilde{\omega}(s,t) := \omega(s,t) + |t-s|$. 
  Define a sequence $(\tau_n)$ as follows: set $\tau_0 := 0$ and
  \begin{align*}
   \tau_{n+1} := \inf\{ u\, :\, \tilde{\omega} (\tau_n,u) \geq \delta,\, \tau_n < u \leq T \} \wedge T.
  \end{align*}
  Moreover, set
  \begin{align*}
   N:= N_\delta(\tilde{\omega}) = \sup \{ n \in \N\,:\, \tau_n < T \}.
 \end{align*}
  It follows that $|\tau_{n+1} - \tau_n| \leq \delta$ and $\omega(\tau_n,\tau_{n+1}) \leq \delta$ for every $n = 0,\ldots,N$. Lemma \ref{lemma:prop_flows} implies that for every $t \in [\tau_n,\tau_{n+1}]$, $n = 0,\ldots,N$,
  \begin{align*}
   \sup_{\xi} |\psi(\tau_n,t,\xi) - \xi| \leq 1
  \end{align*}
  and by the triangle inequality,
  \begin{align*}
   \sup_{\xi} |J(\tau_n,t,\xi)| \leq 2.
  \end{align*}
  Now let $y \colon [0,\tau_1] \to \R^m$ be the solution to 
  \begin{align}\label{eqn:ODE_once_again}
   \dot{y}_t = J(0,t,y_t) b(\psi(0,t,y_t)); \quad t \in [0,\tau_1]
  \end{align}
  with initial condition $y_0 = \xi$. Integrating the equation, we obtain for every $t \in [0,\tau_1]$
  \begin{align*}
   |y_t| &\leq |\xi| + 2 \kappa_1 t + 2 \kappa_2 \int_0^t |\psi(0,s,y_s)|\, ds \\
   &\leq |\xi| + 2(\kappa_1 + \kappa_2) t + 2 \kappa_2 \int_0^t |y_s| \, ds.
  \end{align*}
  Gronwall's Lemma implies that
  \begin{align*}
    |\chi_0(t,\xi)| = |y_t| \leq \exp(2 \kappa_2 t)(|\xi| + 2(\kappa_1 + \kappa_2) t) \leq \exp(2 \kappa_2 \tau_1)(|\xi| + 1)
  \end{align*}
  for every $t \in [0,\tau_1]$. Repeating the same argument shows that
  \begin{align*}
   |\chi_{\tau_n}(t,\xi)| \leq \exp(2 \kappa_2 |\tau_{n+1} - \tau_n|)(|\xi| + 1)
  \end{align*}
  holds for every $t \in [\tau_n,\tau_{n+1}]$. Fix some $n \in \{0, \ldots, N\}$ and some $t \in [\tau_n, \tau_{n+1}]$. From the flow property of $\phi$,
  \begin{align*}
   |\phi(0,t,\xi)| &= |\phi(\tau_n,t,\phi(0,\tau_n,\xi))| = |\psi(\tau_n,t,\chi_{\tau_n}(t,\phi(0,\tau_n,\xi)))| \\
   &\leq |\psi(\tau_n,t,\chi_{\tau_n}(t,\phi(0,\tau_n,\xi))) - \chi_{\tau_n}(t,\phi(0,\tau_n,\xi))| + |\chi_{\tau_n}(t,\phi(0,\tau_n,\xi))| \\
   &\leq 1 + \exp(2 \kappa_2 |\tau_{n+1} - \tau_n|) + \exp(2 \kappa_2 |\tau_{n+1} - \tau_n|)|\phi(0,\tau_n,\xi)|.
  \end{align*}
  Set $\phi_n := \sup_{t \in [\tau_n,\tau_{n+1}]} |\phi(0,t,\xi)|$ and $C_n := \exp(2 \kappa_2 |\tau_{n+1} - \tau_n|)$. The estimate above reads
  \begin{align*}
   \phi_0 \leq 1 + C_0(1 + |\xi|) \qquad \text{and} \qquad   \phi_{n+1} \leq 1 + C_{n+1} + C_{n+1} \phi_n 
  \end{align*}
  for every $n = 0,\ldots,N$. By induction,
  \begin{align*}
   \phi_n &\leq 1 + 2(C_n + C_n C_{n-1} + \ldots + C_n \cdots C_1) + C_n \cdots C_1 C_0 (1 + |\xi|) \\
   &\leq 1 + C_0 \ldots C_N(1 + 2N + |\xi|).
  \end{align*}
  This implies that
  \begin{align*}
   \sup_{t \in [0,T]} |\phi(0,t,\xi)| \leq 1 + \exp(2\kappa_2 T)(1 + 2N + |\xi|).
  \end{align*}
  Next,
  \begin{align*}
   N_{\delta}(\tilde{\omega}) \leq 2N_{\delta}(\omega) + 2T/\delta + 2 \leq 4 N_1(\omega) /\delta  + 2/\delta + 2T/\delta + 2
  \end{align*}
  where the first inequality follows from \cite[Lemma 5]{BFRS16}, the second one uses \cite[Lemma 3]{FR12b}. This implies the bound \eqref{eqn:sup_norm_bound}.

  We proceed with proving \eqref{eqn:p-var_bound}. Choose $\delta \in (0,1]$ as in \eqref{eqn:delta_cond} with the additional condition $\delta \leq \nu^{-p}$. We define $\tilde{\omega}$, $(\tau_n)$ and $N = N_{\delta}(\tilde{\omega})$ as above. If $y \colon [0,\tau_1] \to \R^m$ denotes the solution to \eqref{eqn:ODE_once_again} with initial condition $y_0 = \xi$, we have for $u < v$, $u,v \in [0,\tau_1]$,
  \begin{align*}
   |y_v - y_u| &\leq 2(\kappa_1 + \kappa_2) |v - u| + 2 \kappa_2 \sup_{t \in [0,\tau_1]} |y_t| |v - u| + 2\kappa_2 \int_u^v |y_s - y_u|\, ds \\
   &\leq 2(\kappa_1 + \kappa_2) |v - u| + 2\kappa_2 \exp(2 \kappa_2 \tau_1)(1 + |\xi|) |v - u| + 2 \kappa_2 \int_u^v |y_s - y_u|\, ds. \\
  \end{align*}
  From Gronwall's Lemma,
  \begin{align*}
   |\chi_0(v,\xi) - \chi_0(u,\xi)| &= |y_v - y_u| \leq |v - u| \left[ 2( \kappa_1 + \kappa_2) + 2 \kappa_2 \exp(2 \kappa_2 \tau_1)(1 + |\xi|) \right] \exp(2 \kappa_2 \tau_1) \\
   &\leq 2 \exp(2) [ (\kappa_1 + \kappa_2) + \kappa_2 (1 + |\xi|)]   |v - u|.
  \end{align*}
  Similarly, one can show that for every $n = 0,\ldots, N$, $u,v \in [\tau_n, \tau_{n+1}]$ and $u<v$,
   \begin{align}\label{eqn:chi_estim}
   |\chi_{\tau_n}(v,\xi) - \chi_{\tau_n}(u,\xi)| \leq 2 \exp(2) [ (\kappa_1 + \kappa_2) + \kappa_2 (1 + |\xi|)]   |v - u|.
  \end{align}
  Fix $n \in \{0,\ldots,N\}$ and $u,v \in [\tau_n, \tau_{n+1}]$ with $u<v$. Using the flow property,
  \begin{align*}
   |\phi(0,u,\xi) - \phi(0,v,\xi)| &= | \psi(\tau_n,u,\chi_{\tau_n}(u,\phi(0,\tau_n,\xi))) - \psi(\tau_n,v,\chi_{\tau_n}(v,\phi(0,\tau_n,\xi)))| \\
   &\leq | \psi(\tau_n,u,\chi_{\tau_n}(u,\phi(0,\tau_n,\xi))) - \psi(\tau_n,v,\chi_{\tau_n}(u,\phi(0,\tau_n,\xi))) | \\
   &\quad + |\psi(\tau_n,v,\chi_{\tau_n}(u,\phi(0,\tau_n,\xi))) - \psi(\tau_n,v,\chi_{\tau_n}(v,\phi(0,\tau_n,\xi)))|.
  \end{align*}
  For the first term, we use Lemma \ref{lemma:prop_flows} to see that
  \begin{align*}
   | \psi(\tau_n,u,\chi_{\tau_n}(u,\phi(0,\tau_n,\xi))) - \psi(\tau_n,v,\chi_{\tau_n}(u,\phi(0,\tau_n,\xi))) | \leq C \nu \omega(u,v)^{1/p}.
  \end{align*}
  For the second term, we use again Lemma \ref{lemma:prop_flows}, the triangle inequality and the estimate \eqref{eqn:chi_estim} to see that
  \begin{align*}
    &|\psi(\tau_n,v,\chi_{\tau_n}(u,\phi(0,\tau_n,\xi))) - \psi(\tau_n,v,\chi_{\tau_n}(v,\phi(0,\tau_n,\xi)))| \\
    \leq\ &|\psi(\tau_n,v,\chi_{\tau_n}(u,\phi(0,\tau_n,\xi))) - \psi(\tau_n,v,\chi_{\tau_n}(v,\phi(0,\tau_n,\xi))) - \chi_{\tau_n}(u,\phi(0,\tau_n,\xi)) + \chi_{\tau_n}(v,\phi(0,\tau_n,\xi))|\\
    &\quad + |\chi_{\tau_n}(u,\phi(0,\tau_n,\xi)) - \chi_{\tau_n}(v,\phi(0,\tau_n,\xi))| \\
    \leq\ &2|\chi_{\tau_n}(u,\phi(0,\tau_n,\xi)) - \chi_{\tau_n}(v,\phi(0,\tau_n,\xi))| \\
    \leq\ &4 \exp(2) [ (\kappa_1 + \kappa_2) + \kappa_2 (1 + |\phi(0,\tau_n,\xi)|)]   |v - u| 
  \end{align*}
Putting these estimates together, we have shown that for any $u,v \in [\tau_n, \tau_{n+1}]$,
  \begin{align}\label{eqn:lip_prop}
   |\phi(0,u,\xi) - \phi(0,v,\xi)| \leq C \left[1 + \kappa_2 (1 + |\phi(0,\tau_n,\xi)|) \right] |v- u| + C \omega(u,v)^{1/p} 
  \end{align}
  which implies that
  \begin{align*}
   \| \phi(0,\cdot,\xi) \|_{p-\text{var};[\tau_n,\tau_{n+1}]} \leq  C \left[1 + \kappa_2 (1 + |\phi(0,\tau_n,\xi)|) \right] |\tau_{n+1} - \tau_n| + C \delta^{1/p}.
  \end{align*}
  Now
  \begin{align*}
   \| \phi(0,\cdot,\xi) \|_{p-\text{var};[0,T]} &\leq \sum_{n = 0}^N \| \phi(0,\cdot,\xi) \|_{p-\text{var};[\tau_n,\tau_{n+1}]} \\
   &\leq C \left[1 + \kappa_2 (1 + \| \phi(0,\cdot,\xi) \|_{\infty;[0,T]}) \right] T + C (N_{\delta}(\tilde{\omega}) + 1) \delta^{1/p}
  \end{align*}
  The claim follows by using the bound \eqref{eqn:sup_norm_bound} for the sup-norm and again \cite[Lemma 5]{BFRS16} and \cite[Lemma 3]{FR12b}.
  
  Next we prove the bound \eqref{eqn:1/p_hoelder_bound} in the H\"older case. Here, we may choose $\omega(s,t) = \|\mathbf{x}\|_{1/p - \text{H\"ol}}^p |t - s|$ which implies $N_1(\omega) \leq  \|\mathbf{x}\|_{1/p - \text{H\"ol}}^p T$. As for \eqref{eqn:lip_prop}, we can show that for every $u \leq v$ for which
  \begin{align*}
    \|\mathbf{x}\|_{1/p - \text{H\"ol}}^p |v - u| = \omega(u,v) \leq \delta
  \end{align*}
  holds, we have
   \begin{align*}
   |\phi(0,u,\xi) - \phi(0,v,\xi)| \leq C \left[1 + \kappa_2 (1 + \|\phi(0,\cdot,\xi)\|_{\infty;[0,T]}) \right] |v- u| + C \|\mathbf{x}\|_{1/p - \text{H\"ol}} |v - u|^{1/p}.
  \end{align*}
  We claim that for every $u \leq v$, $u,v \in [0,T]$, we have
   \begin{align*}
   |\phi(0,u,\xi) - \phi(0,v,\xi)| &\leq C \left[1 + \kappa_2 (1 + \|\phi(0,\cdot,\xi)\|_{\infty;[0,T]}) \right] |v - u| \\
   &\quad + C \left[ ( \|\mathbf{x}\|_{1/p - \text{H\"ol}}|v - u|^{1/p}) \vee (\|\mathbf{x}\|_{1/p - \text{H\"ol}}^p |v - u|) \right].
  \end{align*}
  We prove this similarly to \cite[Exercise 4.24]{FH14}. First, there is nothing to show for $v - u \leq h := \delta \|\mathbf{x}\|_{1/p - \text{H\"ol}}^{-p}$. If this is not the case, we define $t_i := (u + ih) \wedge v$ and observe that $t_M = v$ for $M \geq (v - u)/h$ and that $t_{i+1} - t_i \leq h$. Then
  \begin{align*}
   |\phi(0,u,\xi) - \phi(0,v,\xi)| &\leq \sum_{0 \leq i < (v - u)/h} |\phi(0,t_{i+1},\xi) - \phi(0,t_i,\xi)| \\
   &\leq C \left[1 + \kappa_2 (1 + \|\phi(0,\cdot,\xi)\|_{\infty;[0,T]}) \right] |v- u| + C \|\mathbf{x}\|_{1/p - \text{H\"ol}} h^{1/p}(1 + |v - u|/h) \\
   &\leq C \left[1 + \kappa_2 (1 + \|\phi(0,\cdot,\xi)\|_{\infty;[0,T]}) \right] |v- u| + 2C \|\mathbf{x}\|_{1/p - \text{H\"ol}} h^{1/p - 1}|v - u|
  \end{align*}
  which shows the claim by definition of $h$. This also implies \eqref{eqn:1/p_hoelder_bound} by using the sup-bound of our solution and the bound for $N_1(\omega)$ we saw above.

  Next we show that $\phi$ is continuous. From the flow property, for $s \leq u \leq v \leq t$,
  \begin{align*}
   \phi(s,t,\xi) = \phi(v,t,\phi(u,v,\phi(s,u,\xi))),
  \end{align*}
  therefore it suffices to prove that for some given $\delta > 0$, $[s,t] \times \R^m \ni (u,\xi) \mapsto \phi(u,v_0,\xi)$ and $[s,t] \times \R^m \ni (v,\xi) \mapsto \phi(u_0,v,\xi)$ are continuous for fixed $v_0$ resp. $u_0$ where $u_0 ,v_0 \in [s,t]$ and $s < t$ satisfy $|t - s| \leq \delta$. By continuity of $\psi$, it suffices to show that $(u,\xi) \mapsto \chi_{u}(v_0,\xi)$ and $(v,\xi) \mapsto \chi_{u_0}(v,\xi)$ are continuous. This, however, follows by standard arguments for ordinary differential equations (or see the proof of the forthcoming Theorem \ref{thm:main_semiflow} where this is carried out in more detail in even more generality).

  It remains to show that for fixed $\xi \in \R^m$, the path $y_t := \phi^{\mathbf{x}}(0,t,\xi)$ is a solution to \eqref{eqn:rde_with_drift} in the sense of Friz-Victoir. We will give the proof in even more generality in the forthcoming Theorem \ref{thm:main_semiflow}.

\end{proof}

\section{RDE semiflows}\label{sec:RDE_semiflows}

Next, our goal is to further relax the assumptions on $b$ which will yield a \emph{semi}flow of \eqref{eqn:rde_with_drift}. We first prove an a priori estimate for ordinary differential equations.

\begin{lemma}\label{lemma:a_priori_est_ODE}
 Consider an ordinary differential equation of the form
 \begin{align}\label{eqn:ordinary_diff_eq}
   \dot{z}_t = J(t,z_t)b(\psi(t,z_t)); \qquad t \in [0,T]
 \end{align}
 where
 \begin{enumerate}
     \item $b \colon \R^m \to \R^m$ is continuous and satisfies the following conditions:
    \begin{itemize}
     \item[(i)] There exists a constant $C_1$ such that
     \begin{align}\label{eqn:bound_radial_growth}
      \langle b(\xi), \xi \rangle \leq C_1(1 + |\xi|^2) \quad \text{for every } \xi \in \R^m.
     \end{align}
     \item[(ii)] There exists a constant $C_2$ such that
     \begin{align}\label{eqn:bound_tang_growth}
      \left| b(\xi) - \frac{\langle b(\xi),\xi \rangle \xi}{| \xi |^2} \right| \leq C_2(1 + |\xi|) \quad \text{for every } \xi \in \R^m \setminus \{0\}.
     \end{align}

    \end{itemize}

    \item $J \colon [0,T] \times \R^m \to \R^{m \times m}$ is continuous and satisfies 
    \begin{align*}
     \sup_{t \in [0,T];\ \xi \in \R^m} |J(t,\xi) - I_m | \leq \frac{1}{2}
    \end{align*}
    where $I_m$ denotes the identity matrix in $\R^{m \times m}$.

    \item $\psi \colon [0,T] \times \R^m \to \R^m$ is continuous
    and there exists a constant $C_3$ such that
    \begin{align*}
     \sup_{t \in [0,T];\ \xi \in \R^m} | \psi(t,\xi) - \xi | \leq C_3.
    \end{align*}

 \end{enumerate}
 
 Then any solution $z \colon [0,T] \to \R^m$ to \eqref{eqn:ordinary_diff_eq} with initial condition $z_0 = \xi \in \R^m$ satisfies the bounds
  \begin{align*}
   \|z \|_{\infty;[0,T]} \leq (CT + |\xi|) e^{C T} 
  \end{align*}
  and
  \begin{align*}
  \|z \|_{1-\text{var};[0,T]} &\leq C(1 + \|z\|_{\infty;[0,T]})T + (|\xi| - \hat{C})^+ - (|z_T| - \hat{C})^+ \\
  \end{align*}
  where 
  \begin{align*}
   \hat{C} = (C_3 + 1) \vee (4C_3)
  \end{align*}
  and $C$ is a constant depending on $C_i$, $i = 1,2,3,4$ where 
 \begin{align*}
  C_4 := \sup \left\{ |b(\xi)| \, :\, |\xi| \leq (2 C_3 + 1)\vee(5 C_3) + 1 \right\}.
 \end{align*}

\end{lemma}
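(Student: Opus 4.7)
The plan is to decompose $b(\eta) = \alpha(\eta)\eta + b_\perp(\eta)$ with $\alpha(\eta) := \langle b(\eta),\eta\rangle/|\eta|^2$. Hypothesis (i) then reads $\alpha(\eta) \leq C_1(1 + |\eta|^{-2})$ and hypothesis (ii) reads $|b_\perp(\eta)| \leq C_2(1+|\eta|)$. Writing $K := J - I$ (so $|K|\leq 1/2$) and $e := z_t/|z_t|$, the ODE becomes $\dot z_t = (I+K)(\alpha \psi + b_\perp)$ where $\alpha, \psi, b_\perp$ are all evaluated at $(t,z_t)$ in the obvious way. Throughout I exploit the two consequences of $|\psi(t,\xi) - \xi|\leq C_3$: the pointwise comparison $||\psi| - |z_t||\leq C_3$, and the geometric fact that, provided $|z_t|\geq 4C_3$, the quantity $A := \langle e, (I+K)\psi\rangle = |z_t| + \langle e,\psi - z_t\rangle + \langle e, K\psi\rangle$ lies in the interval $[|z_t|/8,\,2|z_t|]$ (both inequalities follow directly from $|\psi - z_t|\leq C_3$, $|K|\leq 1/2$ and $|\psi|\leq |z_t|+C_3\leq \tfrac{5}{4}|z_t|$). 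In particular $|\psi|\leq 10 A$.

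For the sup-norm bound I work with the radial derivative. If $|z_t|\leq \hat C$, then $|\psi(t,z_t)|\leq \hat C + C_3 \leq (2C_3+1)\vee(5C_3)$, so $|b(\psi)|\leq C_4$ and hence $|\dot z_t|\leq \tfrac{3}{2}C_4$ trivially. If $|z_t|>\hat C$, then (since $\hat C\geq C_3+1$) $|\psi|\geq 1$, so $\alpha\leq 2C_1$. Then
\[
\frac{d}{dt}|z_t| = \langle e,\dot z_t\rangle = \alpha A + \langle e,(I+K)b_\perp\rangle.
\]
In the case $\alpha\geq 0$, $\alpha A \leq 2C_1\cdot 2|z_t| = 4C_1|z_t|$, and in the case $\alpha<0$, $\alpha A\leq 0$; in both cases $|\langle e,(I+K)b_\perp\rangle|\leq \tfrac{3}{2}C_2(1+|\psi|)\leq C(1+|z_t|)$. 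Assembling, $\tfrac{d}{dt}|z_t|\leq C(1+|z_t|)$ for all $t$ (enlarging $C$ if necessary to absorb $\tfrac{3}{2}C_4$ in the low regime), and Gronwall immediately yields $|z_t|\leq (CT+|\xi|)e^{CT}$.

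The 1-variation bound is the harder step, because when $\alpha\ll 0$, $|b(\psi)|\sim|\alpha||\psi|$ can be enormous — the hypotheses give no two-sided control on $|b|$. The key observation is that such strongly inward drift is "paid for" by a correspondingly large negative radial derivative. Concretely, solving the identity $\alpha A = \langle e,\dot z_t\rangle - \langle e,(I+K)b_\perp\rangle$ for $|\alpha|A$ and using $|\psi|\leq 10 A$ on $\{|z_t|>\hat C\}$, the case $\alpha<0$ gives $|\alpha||\psi|\leq 10|\alpha|A = -10\alpha A \leq 10[-\tfrac{d}{dt}|z_t|]^+ + C(1+|z_t|)$, while the case $\alpha\geq 0$ gives the simpler $|\alpha||\psi|\leq 10 \alpha A \leq C(1+|z_t|)$. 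Combined with $|\dot z_t|\leq \tfrac{3}{2}|\alpha||\psi| + \tfrac{3}{2}|b_\perp|$ this yields, for $|z_t|>\hat C$,
\[
|\dot z_t| \;\leq\; C(1+|z_t|) + 15\bigl[-\tfrac{d}{dt}|z_t|\bigr]^+.
\]

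To finish I integrate. The region $\{|z_t|\leq \hat C\}$ contributes at most $\tfrac{3}{2}C_4 T$. On $\{|z_t|>\hat C\}$, set $\rho_t := (|z_t|-\hat C)^+$; this is Lipschitz, with $\dot\rho_t = \tfrac{d}{dt}|z_t|$ a.e.\ on $\{|z_t|>\hat C\}$ and $\dot\rho_t = 0$ elsewhere. Hence
\[
\int_0^T \mathbf 1_{|z_t|>\hat C}\bigl[-\tfrac{d}{dt}|z_t|\bigr]^+ dt = \int_0^T [\dot\rho_t]^-\,dt = \int_0^T [\dot\rho_t]^+\,dt + \rho_0 - \rho_T,
\]
and $\int [\dot\rho_t]^+dt$ is controlled by the already-established upper bound $\tfrac{d}{dt}|z_t|\leq C(1+|z_t|)$, giving $C(1+\|z\|_\infty)T$. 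Since $\rho_0 = (|\xi|-\hat C)^+$ and $\rho_T = (|z_T|-\hat C)^+$, assembling the two regions yields the claimed inequality. The principal obstacle throughout is the absence of a lower bound on $\langle b,\xi\rangle$; its resolution — replacing it by the identity relating $\alpha A$ to the radial velocity, via the near-identity of $J$ and the near-parallelism of $\psi$ and $z_t$ for $|z_t|\geq \hat C$ — is the mechanism that prevents the integral of $|\dot z_t|$ from blowing up.
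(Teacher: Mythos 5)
Your proof is correct and rests on the same core mechanism as the paper's: decompose $b(\psi(t,z_t))$ into a component radial with respect to $\psi(t,z_t)$ and a tangential remainder, use the near-identity of $J$ and the closeness of $\psi(t,z_t)$ to $z_t$ to show that the radial projection $A=\langle z_t/|z_t|, J\,\psi(t,z_t)\rangle$ is comparable to $|z_t|$ on $\{|z_t|\geq \hat C\}$, and then trade the (possibly huge) inward radial drift against the decrease of $|z_t|$, whose total negative variation is controlled by its positive variation plus the net change $|\xi|-|z_T|$. The differences are in execution: where the paper bounds the tangential part of $\dot z_t$ via the orthogonal projection onto $\operatorname{span}\{z_t\}$ and obtains $|\dot z_t|\leq \frac{27}{2}|\beta|+8|\langle \dot z_t, z_t/|z_t|\rangle|$, you bound $|\dot z_t|\leq \frac32|\alpha||\psi|+\frac32|b_\perp|$ and control $|\alpha||\psi|\leq 10|\alpha|A$ directly from the ODE identity $\alpha A=\frac{d}{dt}|z_t|-\langle e,(I+K)b_\perp\rangle$; and where the paper decomposes $\{|z_t|>\hat C\}$ into countably many maximal open intervals and argues about which endpoints can contribute, you integrate globally against the Lipschitz function $\rho_t=(|z_t|-\hat C)^+$, which packages the telescoping and the endpoint discussion in one line. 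The latter is genuinely cleaner. One caveat: your final inequality carries the boundary term with coefficient $15$, i.e.\ $15\bigl[(|\xi|-\hat C)^+-(|z_T|-\hat C)^+\bigr]$, rather than coefficient $1$ as stated in the lemma; note, however, that the paper's own proof produces a factor $8$ at the analogous step (in passing from $|\dot z_t|\leq \frac{27}{2}|\beta|+8|\langle\dot z_t,z_t/|z_t|\rangle|$ to \eqref{eqn:1varbound_zlarge}) and silently drops it, and the downstream application in Theorem \ref{thm:main_semiflow} only requires a fixed constant in front of these telescoping terms, so this discrepancy is harmless in both proofs.
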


\begin{proof}
 Let $z \colon [0, T] \to \R^m$ be a solution to \eqref{eqn:ordinary_diff_eq} with initial condition $z_0 = \xi \in \R^m$. For $t \in [0,T]$, set $h_t := \psi(t,z_t) - z_t$. Note that by assumption, $|h_t| \leq C_3$ for every $t$. From the chain rule,
 \begin{align*}
  \frac{d}{dt} |z_t|^2 = 2 \langle z_t,\dot{z}_t \rangle = 2 \langle z_t, J(t,z_t)b(z_t + h_t) \rangle.
 \end{align*}
 Fix $t \in [0,T]$. To simplify notation, set $z = z_t$ and $h = h_t$. We aim to show that there exists a constant $C$ depending on $C_i$, $i = 1,2,3,4$, but independent of $t$ such that
 \begin{align}\label{eqn:claim}
  \langle z, J(t,z)b(z + h) \rangle \leq C(1 + |z|^2).
 \end{align}
  Note that the bound clearly holds for $|z| \leq (C_3 + 1) \vee (4 C_3)$ since $J$ is bounded and $|z + h| \leq (2 C_3 + 1)\vee(5 C_3)$ in this case. From now on, we assume that $|z| \geq (C_3 + 1) \vee (4 C_3)$. Let $b(z + h) = \alpha(z + h) + \beta v$ where $\alpha, \beta \in \R$ and $v \perp (z + h)$, $|v| = 1$. From \eqref{eqn:bound_radial_growth}, we see that
  \begin{align*}
   C_1(1 + |z + h|^2) \geq \langle b(z+h),z+h \rangle = \alpha |z + h|^2.
  \end{align*}
  Since $|z| \geq C_3 + 1$, we have $|z + h| \geq 1$ which implies that $\alpha \leq 2 C_1$. The bound \eqref{eqn:bound_tang_growth} implies that $|\beta| \leq C_2(1 + |z + h|)$. We have
  \begin{align*}
   \langle z, J(t,z)b(z + h) \rangle = \alpha \langle z, J(t,z)(z + h) \rangle + \beta \langle z, J(t,z)v \rangle.
  \end{align*}
  For the second term, we use the Cauchy Schwarz inequality to see that
  \begin{align*}
   |\beta \langle z, J(t,z)v \rangle| \leq | \beta | |z| |J(t,z)| \leq 3 C_2  |z|^2.
  \end{align*}
  Concerning the first term, note that $|h| \leq C_3 \leq |z|/4$, thus
  \begin{align}\label{eqn:lower_bdd_a_priori}
    \begin{split}
    \langle z, J(t,z)(z + h) \rangle &= \langle z, z + h \rangle + \langle z, (J(t,z) - \operatorname{Id}) (z + h) \rangle\geq |z|^2 - |z||h| - |z|^2/2 - |z||h| / 2 \\
    &\geq |z|^2 - |z|^2/4 - |z|^2/2 - |z|^2/8 = |z|^2/8  > 0. 
    \end{split}
  \end{align}
  Therefore, we obtain the bound
  \begin{align*}
    \alpha \langle z, J(t,z)(z + h) \rangle \leq 4 C_1 |z|^2.
  \end{align*}
  This shows that indeed \eqref{eqn:claim} holds for every $z$. Gronwall's Lemma implies the claim for the sup-norm.
  
  We proceed with the bound for the total variation norm of $t \mapsto z_t$. Let $[a,b]$ be a subinterval of $[0,T]$ on which $|z_t| \leq (C_3 + 1) \vee (4C_3) + 1$ for all $t \in [a,b]$. In this case, for every $a \leq u \leq v \leq b$,
  \begin{align*}
   |z_v - z_u| \leq \int_u^v J(s,z_s)b(z_s + h_s)\, ds \leq \frac{3}{2} C_4 (v - u)
  \end{align*}
  which implies that
  \begin{align}\label{eqn:1varbound_zsmall}
   \| z \|_{1-\text{var};[a,b]} \leq \frac{3}{2} C_4 (b - a).
  \end{align}
  Now assume that $[a,b]$ is a subinterval of $[0,T]$ on which $|z_t| \geq (C_3 + 1) \vee (4C_3)$ for all $t \in [a,b]$. In a first step, we show that the total variation of $t \mapsto |z_t|$ has a good bound on $[a,b]$. As above, we can show that
  \begin{align}\label{eqn:deriv_bound_pvar}
   \frac{d}{dt} |z_t| =  \frac{\langle z_t, \dot{z}_t \rangle}{|z_t|} \leq C(1 + |z_t|) \leq C(1 + \|z\|_{\infty;[a,b]})
  \end{align}
  for $t \in (a,b)$. Since $t \mapsto |z_t| =: f(t)$ has finite total variation, there is a decomposition $f = f^+ - f^-$ where $f^+$ and $f^-$ are increasing functions, and
  \begin{align*}
   \|f\|_{1-\text{var};[a,b]} = \|f^+\|_{1-\text{var};[a,b]} + \|f^-\|_{1-\text{var};[a,b]}.
  \end{align*}
  The estimate \eqref{eqn:deriv_bound_pvar} implies that
  \begin{align*}
    \|f^+\|_{1-\text{var};[a,b]} \leq C(1 + \|z\|_{\infty;[a,b]})(b - a).
  \end{align*}
  Since $f$ is nonnegative, $\|f^-\|_{1-\text{var};[a, b]} = \|f^+\|_{1-\text{var};[a, b]} + f(a) - f(b) $, therefore
  \begin{align}\label{eqn:tv_bound_abs_value}
   \| |z| \|_{1-\text{var};[a,b]} = \int_{a}^{b} \left| \frac{d}{dt} |z_t| \right|\, dt \leq 2C(1 + \|z\|_{\infty;[a,b]})(b - a) + |z_a| - |z_b|.
  \end{align}
  We proceed with proving a bound for the total variation of $t \mapsto z_t$ on $[a,b]$. By the triangle inequality,
  \begin{align}\label{eqn:triangle_deriv}
   |\dot{z}_t| \leq \left| \langle \dot{z}_t, \frac{z_t}{|z_t|} \rangle \right| + \left| \dot{z}_t - \langle \dot{z}_t, \frac{z_t}{|z_t|} \rangle \frac{z_t}{|z_t|} \right|.
  \end{align}
  We will first estimate the second term. Fix $t \in (a,b)$ and define $h$ as above. As before, we decompose $b(z + h) = \alpha(z + h) + \beta v$ with $v \perp (z + h)$, $|v| = 1$. Then,
  \begin{align*}
   \left| \dot{z}_t - \langle \dot{z}_t, \frac{z_t}{|z_t|} \rangle \frac{z_t}{|z_t|} \right| &= \left| J(t,z)b(z + h) - \langle J(t,z)b(z + h), \frac{z}{|z|} \rangle \frac{z}{|z|} \right| \\
   &= \left| \alpha J(t,z)(z + h) + \beta J(t,z) v  - \langle J(t,z)(\alpha(z + h) + \beta v), \frac{z}{|z|} \rangle \frac{z}{|z|} \right| \\
    &\leq |\alpha| \left| J(t,z)(z + h) - \langle J(t,z)(z + h) , \frac{z}{|z|} \rangle \frac{z}{|z|} \right| + 3 |\beta|.
  \end{align*}
  Note that the vector $\langle J(t,z)(z + h) , \frac{z}{|z|} \rangle \frac{z}{|z|}$ is the orthogonal projection of $J(t,z)(z + h)$ on the space $\text{span}\{z\}$. Since the distance between $J(t,z)(z + h)$ and its orthogonal projection is minimal among all elements in $\text{span}\{z\}$, we obtain in particular
  \begin{align*}
    \left| J(t,z)(z + h) - \langle J(t,z)(z + h) , \frac{z}{|z|} \rangle \frac{z}{|z|} \right| &\leq  \left| J(t,z)(z + h) - z \right| \\
    &= |(J(t,z) - I_m)z + J(t,z)h | \\
    &\leq \frac{1}{2}|z| + \frac{3}{2}|h| \\
    &\leq \frac{7}{8}|z|.
  \end{align*}
  In \eqref{eqn:lower_bdd_a_priori} we have seen that
  \begin{align*}
   \left| \langle J(t,z)(z+h), z/|z| \rangle \right| \geq  \frac{1}{8}|z|.
  \end{align*}
  Putting these estimates together implies that
  \begin{align*}
   \left| \dot{z}_t - \langle \dot{z}_t, \frac{z_t}{|z_t|} \rangle \frac{z_t}{|z_t|} \right| &\leq 3 |\beta| + 7|\alpha| \left| \langle J(t,z)(z+h), z/|z| \rangle \right| \\
   &\leq 3 |\beta| + 7 \left| \langle J(t,z)(\alpha (z+h) + \beta v), z/|z| \rangle \right| + 7 |\beta| |J(t,z)| \\
   &\leq \frac{27}{2} |\beta| + 7 \left| \langle \dot{z}_t, \frac{z_t}{|z_t|} \rangle \right|.
  \end{align*}
  Going back to \eqref{eqn:triangle_deriv}, we obtain the estimate
  \begin{align*}
   |\dot{z}_t| \leq \frac{27}{2} |\beta| + 8 \left| \langle \dot{z}_t, \frac{z_t}{|z_t|} \rangle \right|
  \end{align*}
  for every $t \in (a,b)$.  We have already seen that $|\beta| \leq C(1 + |z_t|)$ for some constant $C$. From \eqref{eqn:tv_bound_abs_value},
  \begin{align*}
   \int_{a}^{b} \left| \langle \dot{z}_t, \frac{z_t}{|z_t|} \rangle \right|\, dt \leq 2C(1 + \|z\|_{\infty;[a,b]})(b - a) + |z_a| - |z_b|.
  \end{align*}
  Therefore, we can conclude that there is a constant $C$ such that
  \begin{align}\label{eqn:1varbound_zlarge}
   \| z \|_{1-\text{var};[a,b]} = \int_a^b \left|  \dot{z}_t \right|\, dt \leq C(1 + \|z\|_{\infty;[a,b]})(b - a) + |z_a| - |z_b|.
  \end{align}
  Now define
  \begin{align*}
   S := \{t \in (0,T) :\, |z_t| < \hat{C} + 1 \} \quad \text{and} \quad U := \{t \in (0,T) \, :\, |z_t| > \hat{C} \}.
  \end{align*}
  Note that both sets are open in $\R$. Hence, there are countable sets $I$ and $J$ such that $\{(a_k,b_k) \}_{k \in I}$ and $\{(a_k,b_k) \}_{k \in J}$ are disjoint families of open intervals for which
  \begin{align*}
   S = \bigcup_{k \in I} (a_k,b_k) \quad \text{and} \quad U = \bigcup_{k \in J} (a_k,b_k).
  \end{align*}
  Clearly,
  \begin{align*}
   \| z \|_{1-\text{var};[0,T]} \leq \sum_{k \in I \cup J} \| z \|_{1-\text{var};[a_k,b_k]}.
  \end{align*}
  From \eqref{eqn:1varbound_zsmall}, we have
  \begin{align*}
   \sum_{k \in I} \| z \|_{1-\text{var};[a_k,b_k]} \leq \frac{3}{2} C_4 T.
  \end{align*}
  Note that for any $k \in J$, by continuity, $\lim_{t \searrow a_k} |z_t| > \hat{C}$ implies that $a_k = 0$, therefore $|z_{a_k}| = |\xi|$. By the same reasoning, $\lim_{t \nearrow b_k} |z_t| > \hat{C}$ implies $b_k = T$ and $|z_{b_k}| = |z_T|$. In particular, there are at most two elements $k_1, k_2 \in J$ for which $|z_{a_{k_i}}| \neq |z_{b_{k_i}}|$, $i = 1,2$. Using \eqref{eqn:1varbound_zlarge}, this implies that
  \begin{align*}
   \sum_{k \in J} \| z \|_{1-\text{var};[a_k,b_k]} \leq C(1 + \|z\|_{\infty;[a,b]})T + (|\xi| - \hat{C})^+ - (|z_T| - \hat{C})^+
  \end{align*}
  and we can conclude our assertion.

\end{proof}

The next Lemma states similar conditions which will imply uniqueness.

 \begin{lemma}\label{lemma:uniqueness_ODE}
 Consider two solutions $z^1, z^2 \colon [0,T] \to \R^m$ to the equations
  \begin{align}\label{eqn:2_ordinary_diff_eq}
    \begin{split}
   \dot{z}_t^i &= J^i(t,z_t^i)b(\psi^i(t,z_t^i)); \qquad t \in [0,T] \\
   z^i_0 &= \xi^i \in \R^m; \quad i = 1,2.
    \end{split}
 \end{align}
 Let $R > 0$ be such that $R \geq \sup_{t \in [0,T]} |z^i_t|$ for $i = 1,2$. We assume the following:
 \begin{enumerate}
     \item $b \colon \R^m \to \R^m$ is continuous and satisfies the following conditions:
    \begin{itemize}
     \item[(i)] There exists a constant $C_1 = C_1(R)$ such that
     \begin{align}\label{eqn:bound_radial_growth_uniq}
      \langle b(\xi) - b(\zeta), \xi - \zeta \rangle \leq C_1|\xi - \zeta|^2 \quad \text{for every } \xi, \zeta \in B(0,R).
     \end{align}
     \item[(ii)] There exists a constant $C_2 = C_2(R)$ such that
     \begin{align}\label{eqn:bound_tang_growth_uniq}
      \left| b(\xi) - b(\zeta) - \frac{\langle b(\xi) - b(\zeta) , \xi - \zeta \rangle (\xi - \zeta)}{| \xi - \zeta |^2} \right| \leq C_2|\xi - \zeta| \quad \text{for every } \xi, \zeta \in B(0,R) \text{ with } \xi - \zeta \neq 0.
     \end{align}

    \end{itemize}

    \item Both $J^1, J^2 \colon [0,T] \times \R^m \to \R^{m \times m}$ are continuous and there exists a constant $C_3 = C_3(R)$ such that 
    \begin{align*}
      \sup_{t \in [0,T]} |J^1(t,\xi) - J^1(t,\zeta)| \leq C_3 |\xi - \zeta| \quad \text{for every } \xi, \zeta \in B(0,R).
    \end{align*}
    Moreover, we assume that
    \begin{align*}
     \sup_{t \in [0,T];\ |\xi| \leq R} |J^2(t,\xi) - I_m | \leq \frac{1}{2}
    \end{align*}
    where $I_m$ denotes the identity matrix in $\R^{m \times m}$.

    \item Both $\psi^1, \psi^2 \colon [0,T] \times \R^m \to \R^m$ are continuous and
    \begin{align*}
     \sup_{t \in [0,T]} | \psi^2(t,\xi) - \xi - \psi^2(t,\zeta) + \zeta | \leq \frac{1}{4} |\xi - \zeta| \quad \text{for every } \xi, \zeta\in B(0,R).
    \end{align*}
    
    \item There exists an $\varepsilon > 0$ such that
 \begin{align*}
   &\sup_{t \in [0,T];\ \xi \in B(0,R)} |J^1(t,\xi) - J^2(t,\xi)| \leq \varepsilon \qquad  \text{and} \\
   &\sup_{t \in [0,T];\ \xi \in B(0,R)} |b(\psi^1(t,\xi)) - b(\psi^2(t,\xi))| \leq \varepsilon.
 \end{align*}
 \end{enumerate}

 Let $C_4 \geq \sup_{s \in [0,T],\, |\xi| \leq R} |b(\psi^1(s,\xi))|$. Then there is a constant $C$ depending on $C_i$, $i = 1,2,3,4$, on $R$ and on $T$ such that
 \begin{align*}
  \sup_{t \in [0,T]} |z^1_t - z^2_t| \leq C(|\xi^1 - \xi^2| + \sqrt{\varepsilon}).
 \end{align*}
 
 In particular, if $b$ satisfies the conditions \eqref{eqn:bound_radial_growth_uniq} and \eqref{eqn:bound_tang_growth_uniq} locally on every compact set and if $\psi^1 = \psi^2 =: \psi$ and $J^1 = J^2 =: J$ satisfy the stated spatial conditions globally, solutions to \eqref{eqn:ordinary_diff_eq} with the same initial condition are unique.


\end{lemma}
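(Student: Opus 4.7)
The strategy is a Gronwall estimate for $|\Delta_t|^2$ where $\Delta_t := z^1_t - z^2_t$. Differentiating yields
\[
\tfrac{1}{2}\tfrac{d}{dt}|\Delta_t|^2 = \langle \Delta_t,\, J^1(t,z^1_t) b(\psi^1(t,z^1_t)) - J^2(t,z^2_t) b(\psi^2(t,z^2_t))\rangle,
\]
and I telescope the right-hand side through the intermediate quantities $J^1(t,z^2_t)b(\psi^1(t,z^1_t))$, $J^2(t,z^2_t)b(\psi^1(t,z^1_t))$, and $J^2(t,z^2_t)b(\psi^2(t,z^1_t))$ to produce four summands $A_1+A_2+A_3+A_4$. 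Three of them are handled by brute Cauchy--Schwarz: the Lipschitz hypothesis on $J^1$ combined with $|b\circ\psi^1|\leq C_4$ gives $|\langle\Delta_t,A_1\rangle|\leq C_3 C_4|\Delta_t|^2$, while the $\varepsilon$-closeness in hypothesis~(4) together with $|J^2|\leq 3/2$ gives $|\langle\Delta_t,A_2\rangle|+|\langle\Delta_t,A_3\rangle|\leq \tfrac{1}{2}|\Delta_t|^2 + C\varepsilon^2$ after applying $2ab\leq a^2+b^2$.

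The main work is the remaining term $A_4 = J^2(t,z^2_t)[b(\xi)-b(\zeta)]$ with $\xi := \psi^2(t,z^1_t)$ and $\zeta := \psi^2(t,z^2_t)$. Hypothesis~(3) gives $|(\xi-\zeta)-\Delta_t|\leq \tfrac{1}{4}|\Delta_t|$, hence $\tfrac{3}{4}|\Delta_t|\leq |\xi-\zeta|\leq \tfrac{5}{4}|\Delta_t|$. Following the decomposition used in the proof of Lemma~\ref{lemma:a_priori_est_ODE}, write $b(\xi)-b(\zeta) = \alpha(\xi-\zeta) + \beta v$ with $v \perp (\xi-\zeta)$ and $|v|=1$; hypothesis~\eqref{eqn:bound_radial_growth_uniq} yields $\alpha \leq C_1$ (possibly very negative), and \eqref{eqn:bound_tang_growth_uniq} gives $|\beta| \leq C_2|\xi-\zeta|$. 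Expanding
\[
\langle \Delta_t, J^2(t,z^2_t)(\xi-\zeta)\rangle = |\Delta_t|^2 + \langle \Delta_t,(\xi-\zeta) - \Delta_t\rangle + \langle \Delta_t,(J^2(t,z^2_t)-I_m)(\xi-\zeta)\rangle
\]
and combining $|(\xi-\zeta)-\Delta_t|\leq\tfrac{1}{4}|\Delta_t|$ with $|J^2-I_m|\leq\tfrac{1}{2}$ shows that this bracket lies in $[\tfrac{1}{8}|\Delta_t|^2,\tfrac{15}{8}|\Delta_t|^2]$. Because it is nonnegative, $\alpha\cdot\langle\Delta_t,J^2(\xi-\zeta)\rangle\leq \tfrac{15}{8}\max(C_1,0)|\Delta_t|^2$ regardless of the sign of $\alpha$, and the orthogonal piece satisfies $|\beta\langle\Delta_t,J^2 v\rangle|\leq \tfrac{15}{8}C_2|\Delta_t|^2$. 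Thus $\langle\Delta_t,A_4\rangle\leq C|\Delta_t|^2$ for a constant $C$ depending only on $C_1,C_2$.

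Collecting the bounds gives $\tfrac{d}{dt}|\Delta_t|^2 \leq C'|\Delta_t|^2 + C''\varepsilon^2$ with constants depending on $C_1,\ldots,C_4,R,T$. Gronwall's lemma produces $\sup_{t\in[0,T]}|\Delta_t| \leq C(|\xi^1-\xi^2| + \varepsilon)$, which in particular implies the stated bound $C(|\xi^1-\xi^2|+\sqrt{\varepsilon})$. The ``in particular'' statement follows by noting that if $\psi^1=\psi^2$, $J^1=J^2$ and $\xi^1=\xi^2$, one may choose $R$ large enough to contain both solutions on $[0,T]$ (by continuity) and take $\varepsilon = 0$ in hypothesis~(4), forcing $z^1\equiv z^2$. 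The principal obstacle is the positivity of $\langle\Delta_t,J^2(t,z^2_t)(\xi-\zeta)\rangle$: without a quantitative lower bound of the form $c|\Delta_t|^2$ one could not absorb the possibly very negative factor $\alpha$ appearing in front of it, and it is precisely the $1/4$-contraction of the increment of $\psi^2$ in hypothesis~(3) and the $1/2$-closeness of $J^2$ to $I_m$ in hypothesis~(2) that dictate the numerical constants appearing in the statement.
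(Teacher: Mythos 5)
Your proof is correct and follows essentially the same route as the paper's: the same telescoping through intermediate terms, the same radial/tangential decomposition $b(\xi)-b(\zeta)=\alpha(\xi-\zeta)+\beta v$, and the same use of the $1/4$-contraction of $\psi^2$-increments and $|J^2-I_m|\le 1/2$ to obtain positivity of $\langle\Delta_t,J^2(\xi-\zeta)\rangle$. The only cosmetic differences are working in differential rather than integral form and applying Young's inequality to the $\varepsilon$-cross-terms, which yields the marginally sharper bound $C(|\xi^1-\xi^2|+\varepsilon)$; this implies the stated $\sqrt{\varepsilon}$-bound for $\varepsilon\le 1$, and for $\varepsilon>1$ one falls back on the trivial estimate $|\Delta_t|\le 2R$ after enlarging $C$.
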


\begin{proof}
  We have
 \begin{align*}
  |z^1_t - z^2_t|^2 &= |\xi^1 - \xi^2|^2 + 2 \int_0^t \langle z^1_s - z^2_s, J^1(s,z^1_s)b(\psi^1(s,z^1_s)) - J^2(s,z^2_s)b(\psi^2(s,z^2_s)) \rangle\, ds \\
  &= |\xi^1 - \xi^2|^2 + 2\int_0^t \langle z^1_s - z^2_s, (J^1(s,z^1_s) - J^2(s,z^2_s)) b(\psi^1(s,z^1_s))  \rangle\, ds \\
  &\quad + 2\int_0^t \langle z^1_s - z^2_s, J^2(s,z^2_s) (b(\psi^1(s,z^1_s)) - b(\psi^2(s,z^2_s))) \rangle\, ds
 \end{align*}
 for every $t \in [0,T]$. 
 By the Cauchy-Schwarz inequality,
 \begin{align*}
  &\left|\int_0^t \langle z^1_s - z^2_s, (J^1(s,z^1_s) - J^2(s,z^2_s)) b(\psi^1(s,z^1_s))  \rangle\, ds\right| \\
  \leq\ &C_3 C_4 \int_0^t |z^1_s - z^2_s|^2 \, ds + 2 \varepsilon C_4 T R
 \end{align*}
 for every $t \in [0,T]$. We aim to prove that a similar estimate holds for the second integral
 \begin{align}
  &\int_0^t \langle z^1_s - z^2_s, J^2(s,z^2_s) (b(\psi^1(s,z^1_s)) - b(\psi^2(s,z^2_s))) \rangle\, ds \label{eqn:2nd_integral} \\
  =\ &\int_0^t \langle z^1_s - z^2_s, J^2(s,z^2_s) (b(\psi^1(s,z^1_s)) - b(\psi^2(s,z^1_s))) \rangle\, ds \label{eqn:loc_one_sided_lip_1} \\
  &\quad  + \int_0^t \langle z^1_s - z^2_s, J^2(s,z^2_s) (b(\psi^2(s,z^1_s)) - b(\psi^2(s,z^2_s))) \rangle\, ds \label{eqn:loc_one_sided_lip_2}
 \end{align}
 The integral \eqref{eqn:loc_one_sided_lip_1} can be estimated by
 \begin{align*}
  \left| \int_0^t \langle z^1_s - z^2_s, J^2(s,z^2_s) (b(\psi^1(s,z^1_s)) - b(\psi^2(s,z^1_s))) \rangle\, ds \right| \leq 3T R \varepsilon.
 \end{align*}
  We proceed with the integral \eqref{eqn:loc_one_sided_lip_2}. Fix $s \in [0,T]$ and set $h^i_s := \psi^2(s,z^i_s) - z^i_s$. To simplify notation, set $z^i := z^i_s$ and $h^i := h^i_s$, $i = 1,2$. Choose $\alpha$, $\beta \in \R$ such that
 \begin{align*}
  b(z^1 + h^1) - b(z^2 + h^2) = \alpha( z^1 + h^1 - z^2 - h^2) + \beta v
 \end{align*}
 where $v \perp (z^1 + h^1 - z^2 - h^2)$, $|v| = 1$. From our conditions on $b$, 
 \begin{align*}
  \alpha \leq C_1 \qquad \text{and} \qquad |\beta| \leq C_2|z^1 + h^1 - z^2 - h^2|.
 \end{align*}
 Note that
 \begin{align*}
  |h^1 - h^2| = |\psi^2(s,z^1) - z^1 - \psi^2(s,z^2) + z^2| \leq  \frac{1}{4} |z^1 - z^2|
 \end{align*}
 and
 \begin{align*}
  \langle z^1 - z^2, J^2(s,z^2) (b(\psi^2(s,z^1)) - b(\psi^2(s,z^2))) \rangle\ &= \alpha \langle z^1 - z^2, J^2(s,z^2) ( z^1 + h^1 - z^2 - h^2) \rangle \\
  &\quad + \beta \langle z^1 - z^2,J^2(s,z^2) v \rangle.
 \end{align*}
 The second term can be estimated using Cauchy-Schwarz and our bound for $\beta$:
 \begin{align*}
  |\beta \langle z^1 - z^2,J^2(s,z^2) v \rangle| \leq C|z^1 - z^2|^2
 \end{align*}
 where $C$ can be chosen uniformly over $s \in [0,T]$. For the first term, note that
 \begin{align*}
  &\langle z^1 - z^2, J^2(s,z^2) ( z^1 + h^1 - z^2 - h^2) \rangle \\
  =\ &\langle z^1 - z^2,  z^1 + h^1 - z^2 - h^2 \rangle  + \langle z^1 - z^2, (J^2(s,z^2) - \operatorname{Id}) (z^1 + h^1 - z^2 - h^2) \rangle \\
  \geq\ &| z^1 - z^2|^2 - | z^1 - z^2||h^1 - h^2| - \frac{1}{2}| z^1 - z^2|^2 - \frac{1}{2} |z^1 - z^2||h^1 - h^2| \\
  \geq\ &| z^1 - z^2|^2 - \left( \frac{1}{4} + \frac{1}{2} + \frac{1}{8} \right) | z^1 - z^2|^2 \geq 0.
 \end{align*}
 This implies that
 \begin{align*}
  \alpha \langle z^1 - z^2, J^2(s,z^2) ( z^1 + h^1 - z^2 - h^2) \rangle \leq C|z^1 - z^2|^2
 \end{align*}
 for some $C$ which does not depend on $s$. This shows that there is a constant $C$ such that the integral \eqref{eqn:2nd_integral} can be bounded by
 \begin{align*}
  3TR \varepsilon + C\int_0^t |z^1_s - z^2_s|^2\, ds
 \end{align*}
 for every $t \in [0,T]$. Together with our former estimates, this shows that
 \begin{align*}
  |z^1_t - z^2_t|^2 \leq |\xi^1 - \xi^2|^2 + \varepsilon TR(3 + 2 C_4) + C\int_0^t |z^1_s - z^2_s|^2\, ds
 \end{align*}
 for every $t \in [0,T]$. Applying Gronwall's Lemma shows the claim.

\end{proof}

The next theorem contains the second main result of our work.

\begin{theorem}\label{thm:main_semiflow}
 Let $\mathbf{x}$ be a weak geometric $p$-rough path for some $p \geq 1$ and let $\omega$ be a control function which controls its $p$-variation. Assume that $\sigma = (\sigma_1,\ldots,\sigma_d)$ is a collection of $\operatorname{Lip}^{\gamma + 1}$-vector fields for some $\gamma > p$ and choose $\nu \geq |\sigma|_{\operatorname{Lip}^{\gamma}}$. Assume that $b \colon \R^m \to \R^m$ is continuous, satisfies the local conditions \eqref{eqn:bound_radial_growth_uniq}, \eqref{eqn:bound_tang_growth_uniq} on every compact set and the growth conditions \eqref{eqn:bound_radial_growth} and \eqref{eqn:bound_tang_growth}.
 
 Then the following holds true:
 \begin{enumerate}
  \item The solution semiflow $\phi$ to  \eqref{eqn:rde_with_drift} exists and is continuous.
  \item There is a constant $C$ depending on $p$, $\gamma$, $\nu$, the constants $C_1$ and $C_2$ from \eqref{eqn:bound_radial_growth} and  \eqref{eqn:bound_tang_growth} and on
  \begin{align*}
   C_3 := \sup \{ |b(\xi)|\, :\, |\xi| \leq 6 \}
  \end{align*}
  such that
  \begin{align}\label{eqn:sup_norm_bound_semiflow}
   \| \phi(0,\cdot,\xi)\|_{\infty;[0,T]} \leq C \exp(C T)(1 + N_1(\omega) + |\xi|)
  \end{align}
   and
  \begin{align}\label{eqn:p-var_bound_semiflow}
   \| \phi(0,\cdot,\xi) \|_{p-\text{var};[0,T]} \leq C \exp(C T)(1 + N_1(\omega) + |\xi|)
  \end{align}
  hold for every $\xi \in \R^m$. 
  \item For every initial condition $\xi \in \R^m$, the path $t \mapsto \phi(0,t,\xi)$ is a solution to \eqref{eqn:rde_with_drift} in the sense of Friz--Victoir.
 \end{enumerate}
 
\end{theorem}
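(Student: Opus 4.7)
The plan is to reuse the flow-decomposition approach $\phi^{\mathbf{x}}(s,t,\xi) = \psi^{\mathbf{x}}(s,t,\chi_s^{\mathbf{x}}(t,\xi))$ from the proof of Theorem \ref{thm:flow_lin_growth}, but with the global Lipschitz-plus-linear-growth control on the ODE \eqref{eqn:ODE_flow} replaced by Lemmas \ref{lemma:a_priori_est_ODE} and \ref{lemma:uniqueness_ODE}. First, using Lemma \ref{lemma:prop_flows}, I would pick $\delta \in (0,1]$ small enough (depending on $p$, $\gamma$, $\nu$) so that whenever $\tilde{\omega}(s,t) := \omega(s,t) + |t-s| \leq \delta$ one has
\begin{align*}
 \sup_{\xi \in \R^m} |J^{\mathbf{x}}(s,t,\xi) - I_m| \leq \tfrac{1}{2} \qquad \text{and} \qquad \sup_{\xi \in \R^m} |\psi^{\mathbf{x}}(s,t,\xi) - \xi| \leq 1,
\end{align*}
and then define the greedy partition $\tau_0 = 0$, $\tau_{n+1} := \inf\{u > \tau_n : \tilde{\omega}(\tau_n,u) \geq \delta\} \wedge T$ with $N := N_\delta(\tilde{\omega})$ steps. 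On each slab $[\tau_n,\tau_{n+1}]$ this calibration makes the hypotheses of Lemmas \ref{lemma:a_priori_est_ODE} and \ref{lemma:uniqueness_ODE} hold with their constant $C_3$ bounded by $1$; in particular the corresponding $C_4$ in Lemma \ref{lemma:a_priori_est_ODE} is bounded by $\sup\{|b(\xi)| : |\xi| \leq 6\}$, matching the definition used in the statement.

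On each slab, I would obtain forward existence of $\chi_{\tau_n}^{\mathbf{x}}(\cdot,\eta)$ from Peano's theorem (using continuity of $b$, $J^{\mathbf{x}}$, $\psi^{\mathbf{x}}$) combined with the a priori sup-estimate of Lemma \ref{lemma:a_priori_est_ODE} to prevent blow-up, and forward uniqueness from Lemma \ref{lemma:uniqueness_ODE} applied locally (using \eqref{eqn:bound_radial_growth_uniq}, \eqref{eqn:bound_tang_growth_uniq} on compact sets). Patching via Lemma \ref{lemma:flow_well_defined} then yields the semiflow $\phi^{\mathbf{x}}$. For the sup-norm bound, iterating Lemma \ref{lemma:a_priori_est_ODE} across the $N$ slabs gives
\begin{align*}
 |\phi^{\mathbf{x}}(0,\tau_{n+1},\xi)| \leq 1 + e^{C|\tau_{n+1}-\tau_n|}\bigl(|\phi^{\mathbf{x}}(0,\tau_n,\xi)| + C|\tau_{n+1}-\tau_n|\bigr),
\end{align*}
and since the exponents telescope to $\sum_n |\tau_{n+1}-\tau_n| = T$, one recovers $\|\phi^{\mathbf{x}}(0,\cdot,\xi)\|_{\infty} \leq e^{CT}(|\xi| + CT + CN)$, which combined with $N \leq C(N_1(\omega) + T + 1)$ via \cite[Lemma 5]{BFRS16} and \cite[Lemma 3]{FR12b} yields \eqref{eqn:sup_norm_bound_semiflow}. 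The $p$-variation bound would follow exactly as in the proof of Theorem \ref{thm:flow_lin_growth}: decompose $\phi = \psi\circ\chi$ on each slab, bound the $\psi$-part by $C\nu\omega(u,v)^{1/p}$ via Lemma \ref{lemma:prop_flows} and the $\chi$-part by the $1$-variation estimate of Lemma \ref{lemma:a_priori_est_ODE}. The key observation is that the boundary terms $(|\cdot|-\hat{C})^+$ in that estimate telescope across slabs, so only $O(1 + \|\phi^{\mathbf{x}}\|_\infty)T$ plus one surviving initial-condition boundary term is produced, giving a $p$-variation bound linear in $N$ rather than exponential.

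Continuity of $\phi^{\mathbf{x}}$ reduces to joint continuity of $(s,t,\xi,\mathbf{x}) \mapsto \chi_s^{\mathbf{x}}(t,\xi)$: for fixed $\mathbf{x}$ this is classical, and for $\mathbf{x}^n \to \mathbf{x}$ in $p$-variation the flows $\psi^{\mathbf{x}^n}$ and inverses $J^{\mathbf{x}^n}$ converge uniformly on compact sets in $\xi$ by standard continuity of rough flows, so the $\varepsilon$-hypothesis of Lemma \ref{lemma:uniqueness_ODE} is met and $\chi^{\mathbf{x}^n} \to \chi^{\mathbf{x}}$ uniformly. Finally, to verify that $t \mapsto \phi^{\mathbf{x}}(0,t,\xi)$ is a Friz--Victoir solution, I would approximate $\mathbf{x}$ by canonical lifts $\mathbf{x}^n$ of smooth paths $x^n$ using \cite[Proposition 8.12]{FV10}; for smooth $x^n$ the flow-decomposition identity gives $\phi^{\mathbf{x}^n}(0,\cdot,\xi) = y^n$, the classical ODE solution, and uniform convergence from the continuity step closes the argument. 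I expect the main obstacle to be the continuous-dependence step: applying Lemma \ref{lemma:uniqueness_ODE} uniformly across the $N$ slabs requires that the error $\varepsilon$ controlling $|J^{\mathbf{x}^n} - J^{\mathbf{x}}|$ and $|b\circ\psi^{\mathbf{x}^n} - b\circ\psi^{\mathbf{x}}|$ not accumulate too fast across slabs, which calls for a careful greedy-partition argument performed simultaneously for $\mathbf{x}^n$ and $\mathbf{x}$ and an inductive control of the slab-by-slab errors.
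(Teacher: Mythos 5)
Your proposal follows essentially the same route as the paper: flow decomposition $\phi=\psi\circ\chi$, a greedy partition calibrated via Lemma \ref{lemma:prop_flows} so that Lemmas \ref{lemma:a_priori_est_ODE} and \ref{lemma:uniqueness_ODE} apply on each slab (with the same identification of $C_4$ with $\sup\{|b(\xi)|:|\xi|\leq 6\}$), iteration across slabs for the sup-norm and $p$-variation bounds, and the same inductive slab-by-slab stability argument for continuity and the Friz--Victoir property. The only slight imprecision is that the boundary terms $(|\cdot|-\hat C)^+$ do not telescope exactly, since $\phi(0,\tau_{n+1},\xi)$ differs from $\chi_{\tau_n}(\tau_{n+1},\phi(0,\tau_n,\xi))$ by an application of $\psi$; this mismatch is at most $1$ per slab and contributes an extra $O(N)$ term, which is harmless and is exactly how the paper handles it.
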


\begin{remark}
 \begin{itemize}
  \item[(i)] For $m = 1$, the growth condition \eqref{eqn:bound_tang_growth} is always satisfied. For $m \geq 2$, assuming only condition \eqref{eqn:bound_radial_growth} does in general \emph{not} prevent explosion of the solution in finite time; see the discussion in the introduction.
  \item[(ii)] The local conditions \eqref{eqn:bound_radial_growth_uniq} and \eqref{eqn:bound_tang_growth_uniq} are satisfied in the case when $b$ is locally Lipschitz continuous. In the proof, it will become clear that they are used in order to prove uniqueness and the continuity statements. Dropping them would still imply a priori estimates for solutions to  \eqref{eqn:rde_with_drift}.
  \item[(iii)] Note that we can in general not expect to obtain a bound similar to \eqref{eqn:p-var_bound_semiflow} for the H\"older norm, not even for $\mathbf{x}$ being a H\"older rough path. Indeed, let $\sigma \equiv 0$, $m = 1$ and $b(v) = -|v|^2$. Let $y$ be the solution to  \eqref{eqn:rde_with_drift} with initial condition $\xi$. Then
  \begin{align*}
   \| y \|_{1-\text{H\"ol};[0,T]} \geq \lim_{t \searrow 0}  \frac{|y_t - y_0|}{t} = |b(\xi)| = |\xi|^2
  \end{align*}
  which shows that the H\"older norm can not grow at most linearly in $|\xi|$.

 \end{itemize}

\end{remark}

\begin{proof}[Proof of Theorem \ref{thm:main_semiflow}]
In the following proof, $C$ will be a constant which may depend on $p$, $\gamma$, $\nu$ and the constants $C_1$, $C_2$, $C_3$, but whose actual value may change from line to line. 

 As already seen in Theorem \ref{thm:flow_lin_growth}, our assumptions on $\sigma$ imply that the solution flow $\psi$ to \eqref{eqn:rough_DE_flow} exists, is twice differentiable and has a twice differentiable inverse. Define $J$ to be the inverse of its derivative and $\chi$ as in Definition \ref{def:sol_flow_rde_with_drift}. From \cite[Proposition 11]{FV10} and the chain rule,
 \begin{align*}
  \sup_{s,t \in [0,T];\, \xi \in \R^m} |D_{\xi} J(s,t,\xi)| < \infty
 \end{align*}
 and in particular, there exists a constant $C$ such that
 \begin{align*}
  \sup_{s,t \in [0,T]} |J(s,t,\xi) - J(s,t,\zeta)| \leq C |\xi - \zeta|
 \end{align*}
 holds for every $\xi, \zeta \in \R^m$. Using \ref{lemma:prop_flows}, we can choose $\delta >0$ sufficiently small such that $\omega(s,t) \leq \delta$ implies that
 \begin{align*}
  \sup_{u \in [s,t]} |J(s,u,\xi) - I_m| \leq \frac{1}{2}, \quad \sup_{u \in [s,t];\, \xi \in \R^m} |\psi(s,u,\xi) - \xi| \leq 1 
 \end{align*}
 and
 \begin{align*}
  \sup_{u \in [s,t]} |\psi(s,u,\xi) - \xi - \psi(s,u,\zeta) + \zeta| \leq \frac{1}{4} |\xi - \zeta|
 \end{align*}
 for every $\xi, \zeta \in \R^m$. Continuity of $\psi$, $J$ and $b$, Lemma \ref{lemma:a_priori_est_ODE} and Lemma \ref{lemma:uniqueness_ODE} imply that $[s,t] \ni u \mapsto \chi_s(u,\xi)$ is well defined for such $[s,t]$ and every $\xi \in \R^m$. The set
 \begin{align*}
  \mathcal{I} := \{[s,t]\,:\, \omega(s,t) \leq \delta \}
 \end{align*}
 satisfies the assumptions in Definition \ref{def:sol_flow_rde_with_drift}, and by Lemma \ref{lemma:flow_well_defined}, $\phi$ is well defined as the semiflow to \eqref{eqn:rde_with_drift}.

 We proceed with the bound for the sup-norm. Set $\tilde{\omega}(s,t) = |t - s| + \omega(s,t)$, define $(\tau_n)$ by setting $\tau_0 := 0$ and
  \begin{align*}
   \tau_{n+1} := \inf\{ u\, :\, \tilde{\omega}(\tau_n,u) \geq \delta,\, \tau_n < u \leq T \} \wedge T,
  \end{align*}
  and set 
  \begin{align*}
   N:= N_\delta(\tilde{\omega}) = \sup \{ n \in \N\,:\, \tau_n < T \}.
  \end{align*}
Choosing $\delta$ smaller if necessary, Lemma \ref{lemma:a_priori_est_ODE} shows that
  \begin{align}\label{eqn:sup_bound_interm}
   \sup_{t \in [\tau_n, \tau_{n+1}]} |\chi_{\tau_n}(t,\xi)| \leq (1 + |\xi|)\exp(C|\tau_{n+1} - \tau_n|)
  \end{align}
  holds for every $n = 0,\ldots,N$. As seen in the proof of Theorem \ref{thm:flow_lin_growth}, this bound implies that
  \begin{align*}
   \sup_{t \in [0,T]} |\phi(0,t,\xi)| \leq 1 + \exp(CT)(1 + 2N + |\xi|)
  \end{align*}
  and we can conclude as seen in Theorem \ref{thm:flow_lin_growth}. 
  
  Next, we prove the bound \eqref{eqn:p-var_bound_semiflow}. Let $\tau_n \leq u < v \leq \tau_{n+1}$. As seen in the proof of Theorem \ref{thm:flow_lin_growth}, we can use the semiflow property of $\phi$ and the triangle inequality to obtain the estimate
  \begin{align*}
   | \phi(0,v,\xi) - \phi(0,u,\xi)| \leq C \nu \omega(u, v)^{1/p} + 2| \chi_{\tau_n}(u,\phi(0,\tau_n,\xi)) - \chi_{\tau_n}(v,\phi(0,\tau_n,\xi))|.
  \end{align*}
  Using the total variation bound and the bound for the sup-norm in Lemma \ref{lemma:a_priori_est_ODE}, we see that
  \begin{align*}
   \| \phi(0,\cdot,\xi) \|_{p-\text{var};[\tau_n, \tau_{n+1}]} &\leq C \nu \delta^{1/p} + 2\| \chi_{\tau_n}(\cdot,\phi(0,\tau_n,\xi)) \|_{1-\text{var};[\tau_n,\tau_{n+1}]} \\
   &\leq C \nu \delta^{1/p} + Ce^{CT}(1 + |\phi(0,\tau_n,\xi)|)|\tau_{n+1} - \tau_n| \\ 
   &\quad + 2 (\phi(0,\tau_n,\xi) - 4)^+ - 2(\chi_{\tau_n}(\tau_{n+1},\phi(0,\tau_n,\xi)) - 4)^+.
  \end{align*}
  Therefore,
  \begin{align*}
   \| \phi(0,\cdot,\xi) \|_{p-\text{var};0, T]} &\leq \sum_{n = 0}^N  \| \phi(0,\cdot,\xi) \|_{p-\text{var};[\tau_n, \tau_{n+1}]} \\
   &\leq\ C \nu (N_{\delta}(\tilde{\omega}) + 1) \delta^{1/p} + Ce^{CT}(1 + \| \phi(0,\cdot,\xi)\|_{\infty;[0,T]}) T \\
   &\quad + 2 \sum_{n = 0}^N (\phi(0,\tau_n,\xi) - 4)^+ - (\chi_{\tau_n}(\tau_{n+1},\phi(0,\tau_n,\xi)) - 4)^+.
  \end{align*}
  For the last sum, we can estimate
  \begin{align*}
   &\sum_{n = 0}^N (\phi(0,\tau_n,\xi) - 4)^+ - (\chi_{\tau_n}(\tau_{n+1},\phi(0,\tau_n,\xi)) - 4)^+ \\
   \leq\ &|\xi| + \sum_{n = 0}^{N - 1} |\phi(0,\tau_{n+1},\xi)  - \chi_{\tau_{n}}(\tau_{n+1},\phi(0,\tau_n,\xi)) |
  \end{align*}
  and by the semiflow property of $\phi$,
  \begin{align*}
   |\phi(0,\tau_{n+1},\xi) - \chi_{\tau_{n}}(\tau_{n+1},\phi(0,\tau_n,\xi))| &= |\psi(\tau_n,\tau_{n+1},\chi_{\tau_{n}}(\tau_{n+1},\phi(0,\tau_n,\xi))) - \chi_{\tau_{n}}(\tau_{n+1},\phi(0,\tau_n,\xi))| \\
   &\leq 1
  \end{align*}
  which shows that
  \begin{align*}
   \sum_{n = 0}^N (\phi(0,\tau_n,\xi) - 4)^+ - (\chi_{\tau_n}(\tau_{n+1},\phi(0,\tau_n,\xi)) - 4)^+ \leq |\xi| + N_{\delta}(\tilde{\omega}).
  \end{align*}
  As seen in Theorem \ref{thm:flow_lin_growth}, this implies the claim.

  Next, we show that $\phi$ is continuous. As in Theorem \ref{thm:flow_lin_growth}, we can use the semiflow property to see that it is enough to prove that $[s,t] \times \R^m \ni (u,\xi) \mapsto \phi(u,v_0,\xi)$ and $[s,t] \times \R^m \ni (v,\xi) \mapsto \phi(u_0,v,\xi)$ are continuous for fixed $u_0, v_0 \in [s,t]$ where $[s,t]$ is any subinterval of $[0,T]$ with the property that $|t - s| \leq \delta$. Fix such an interval and choose sequences $u_n \to u_0$, $v_n \to v_0$ and $\xi_n \to \xi_0$ for  $n \to \infty$. We first prove that
  \begin{align*}
   \chi_{u_0}(v_n,\xi_n) \to \chi_{u_0}(v_0,\xi_0) 
  \end{align*}
  for $n \to \infty$. By the triangle inequality,
  \begin{align*}
   | \chi_{u_0}(v_n,\xi_n) - \chi_{u_0}(v_0,\xi_0) | \leq |  \chi_{u_0}(v_n,\xi_n) -  \chi_{u_0}(v_n,\xi_0) | + | \chi_{u_0}(v_n,\xi_0) - \chi_{u_0}(v_0,\xi_0) |.
  \end{align*}
  The second term converges to $0$ for $n \to \infty$ by time continuity of the solution. From Lemma \ref{lemma:a_priori_est_ODE},
  \begin{align*}
   \sup_{n \geq 0} \| \chi_{u_0}(\cdot,\xi_n) \|_{\infty;[u_0,t]} + \| \chi_{u_0}(\cdot,\xi_0) \|_{\infty;[u_0,t]} < \infty
  \end{align*}
  and therefore
  \begin{align*}
     |  \chi_{u_0}(v_n,\xi_n) -  \chi_{u_0}(v_n,\xi_0) | \leq \|  \chi_{u_0}(\cdot,\xi_n) -  \chi_{u_0}(\cdot,\xi_0) \|_{\infty;[u_0,t]} \to 0
  \end{align*}
  as $n \to \infty$ by Lemma \ref{lemma:uniqueness_ODE}. From continuity of $\psi$, this implies that $[s,t] \times \R^m \ni (v,\xi) \mapsto \phi(u_0,v,\xi)$ is continuous. Next, we use again the triangle inequality to see that
  \begin{align*}
   | \chi_{u_n}(v_0,\xi_n) - \chi_{u_0}(v_0,\xi_0) | \leq |  \chi_{u_n}(v_0,\xi_n) - \chi_{u_n}(v_0,\xi_0) | + | \chi_{u_n}(v_0,\xi_0) - \chi_{u_0}(v_0,\xi_0) |.
  \end{align*}  
  The first term converges to $0$ again by Lemma \ref{lemma:uniqueness_ODE}. It remains to show that 
  \begin{align*}
   \chi_{u_n}(v_0,\xi_0) \to  \chi_{u_0}(v_0,\xi_0)
  \end{align*}
  as $n \to \infty$. To do so, we first claim that $J(u_n,\cdot,\cdot) \to J(u_0,\cdot,\cdot)$ and $b(\psi(u_n,\cdot,\cdot)) \to b(\psi(u_0,\cdot,\cdot))$ converge uniformly on compact sets as $n \to \infty$. Indeed: For the second claim, since $b$ is continuous, it is enough to prove that $\psi(u_n,\cdot,\cdot) \to \psi(u_0,\cdot,\cdot)$ converges uniformly on compact sets as $n \to \infty$. Choose $t \in [0,T]$ and $\xi \in \R^m$. From the flow property,
  \begin{align*}
   \psi(u_n,t,\xi) = \psi(0,t,\psi(u_n,0,\xi)),
  \end{align*}
  and Lemma \ref{lemma:prop_flows} shows that it is enough to prove that $\psi(u_n,0,\cdot) \to \psi(u_0,0,\cdot)$ converges uniformly on compact sets as $n \to \infty$. By the flow property, $\psi(u_n,0,\cdot) = \psi^{-1}(0,u_n,\cdot)$, seen as homeomorphisms on $\R^m$. The inverse flow $\psi^{-1}$ is generated by a rough differential equation where we let the rough path run backwards in time (cf. \cite[Section 11.2]{FV10}), therefore the second claim follows by the standard estimates for solutions to rough differential equations, see Lemma \ref{lemma:prop_flows}. Concerning the first claim, the chain rule shows that
  \begin{align*}
   J(u_n,t,\xi) = D_{\zeta} \psi^{-1}(u_n,t,\zeta) \vert_{\zeta = \psi(u_n,t,\xi)},
  \end{align*}
  therefore it is enough to show that $D_{\xi} \psi^{-1}(u_n,\cdot,\xi) \to D_{\xi} \psi^{-1}(u_0,\cdot,\xi)$ converges uniformly on compact sets as $n \to \infty$. From $D_{\xi} \psi^{-1}(u_n,t,\xi) = D_{\xi} \psi(t,u_n,\xi)$ and the identity
  \begin{align*}
   D_{\xi} \psi(t,u_n,\xi) = D_{\zeta} \psi(0,u_n,\zeta) \vert_{\zeta = \psi(t,0,\xi)} D_{\xi} \psi(t,0,\xi),
  \end{align*}
  we see that it is sufficient to prove that $D_{\xi} \psi(0,u_n,\xi) \to D_{\xi} \psi(0,u_0,\xi)$ converges uniformly on compact sets as $n \to \infty$. Assume first that $0 \leq u_n \leq u_0$ for all $n \in \N$. Using \ref{lemma:prop_flows}, we can deduce the estimate
  \begin{align*}
   |D_{\xi} \psi(0,u_n,\xi) - D_{\xi} \psi(0,u_0,\xi)| \leq C \nu \omega(u_n,u_0)^{1/p} \exp(C \nu^p \omega(0,T)) 
  \end{align*}
  and the claim follows in this case. If $0 \leq u_0 \leq u_n$ for all $n \in \N$, the same estimate holds with $\omega(u_n,u_0)$ replaced by $\omega(u_0,u_n)$ which again implies the claim.
  
  Now set $y^n_w :=  \chi_{u_n}(w,\xi_0)$ for $n \geq 0$ and
  \begin{align*}
   R := \sup_{n \geq 0} \|y^n \|_{\infty;[u_n,t]}.
  \end{align*}
  Note that $R$ is finite by Lemma \ref{lemma:a_priori_est_ODE}. 
  We first prove right-continuity, i.e. we assume that $u_0 \leq u_n $ for all $n \geq 0$. Note that for every $n \geq 0$,
  \begin{align*}
   y^0_v = y^0_{u_n} +  \int_{u_n}^{v} J(u_0,w,y^0_w) b( \psi(u_0,w,y^0_w)) \, dw.
  \end{align*}
  By uniform convergence of $J(u_n,\cdot,\cdot)$ and $b(\psi(u_n,\cdot,\cdot))$, we can use Lemma \ref{lemma:uniqueness_ODE} to see that for any given $\varepsilon > 0$ we can choose $n$ large enough such that 
  \begin{align*}
   |y^0_{v_0} - y^n_{v_0}| \leq C(\varepsilon + |y^0_{u_n} - \xi_0|)
  \end{align*}
  holds for every $n \in \N$ where $C$ does not depend on $n$ or $\varepsilon$. By continuity of $y^0$, we see that the right hand side can be made arbitrary small for large $n$, therefore $y^n_{v_0} \to y_{v_0}$ as $n$ tends to infinity. Now we prove left-continuity, i.e. we assume that $u_n \leq u_0$. Noting that for every $n \geq 0$,
   \begin{align*}
   y^n_v = y^n_{u_0} +  \int_{u_0}^{v} J(u_n,w,y^n_w) b( \psi(u_n,w,y^n_w)) \, dw,
  \end{align*} 
  we can argue as before to see that  for any given $\varepsilon > 0$, we can find large $n$ such that
  \begin{align*}
   |y^0_{v_0} - y^n_{v_0}| \leq C(\varepsilon + |\xi_0 - y^n_{u_0}|).
  \end{align*}
  Therefore, it suffices to prove that $y^n_{u_0} \to \xi_0$ for $n \to \infty$, but this follows immediately from the estimate
  \begin{align*}
   |y^n_{u_0} - \xi_0| \leq \left| \int_{u_n}^{u_0} J(u_n,w,y^n_w) b( \psi(u_n,w,y^n_w)) \, dw \right| \leq |u_n - u_0| \sup_{u,v \in [s,t], |\xi| \leq R} |J(u,v,\xi) b( \psi(u,v,\xi))|.
  \end{align*}
  Thus we have proven left- and right continuity of $u \mapsto \chi_u(v_0,\xi_0)$ which implies that indeed $\phi$ is continuous.

  We finally show that $t \mapsto \phi(0,t,\xi)$ is a solution to \eqref{eqn:rde_with_drift} in the sense of Friz--Victoir. Fix a rough path $\mathbf{x} = \mathbf{x}^0$, an initial condition $\xi \in \R^m$ and let $(x^n)$ be a sequence of smooth paths for which the lifts $\mathbf{x}^n$ satisfy \eqref{eqn:approx_paths}. Fix some $p' \in (p,\gamma)$. Set $\omega^n(s,t) := \| \mathbf{x}^n \|_{p'-\text{var},[s,t]}^{p'}$. The $\omega^n$ are control functions which control the $p'$-variation of $\mathbf{x}^n$ for every $n \geq 0$. By interpolation \cite[Lemma 8.16]{FV10},
  \begin{align*}
   d_{p'-\text{var};[0,T]}(\mathbf{x},\mathbf{x}^n) \to 0
  \end{align*}
  as $n \to \infty$. Choose $\delta > 0$ as above. W.l.o.g., we may assume that $d_{p'-\text{var};[0,T]}(\mathbf{x},\mathbf{x}^n) \leq \delta^{1/p'}/2$ for all $n \geq 1$. Let $s \leq t$ such that $\omega^0(s,t) \leq \delta/2^{p'}$. It follows that
  \begin{align*}
   \omega^n(s,t) = \| \mathbf{x}^n \|_{p'-\text{var},[s,t]}^{p'} \leq \delta/2 + 2^{p' - 1} \| \mathbf{x} \|_{p'-\text{var},[s,t]}^{p'} \leq \delta.
  \end{align*}
  Therefore,
  \begin{align*}
   \mathcal{I}_0 := \{ [s,t]\, :\, \omega^0(s,t) \leq \delta/2^{p'} \}
  \end{align*}
  is a family of intervals for which $\omega^n(s,t) \leq \delta$ for every $[s,t] \in \mathcal{I}_0$ and every $n \geq 0$. We set $\phi^n := \phi^{\mathbf{x}^n}$ and use a similar notation for $\psi$, $J$ and $\chi$. We have to show that $\phi^n(0,\cdot,\xi) \to \phi(0,\cdot,\xi)$ uniformy as $n \to \infty$. Fix a sequence
  \begin{align*}
   0 = \tau_0 < \tau_1 < \ldots < \tau_N < \tau_{N + 1} = T
  \end{align*}
  with $[\tau_i, \tau_{i+1}] \in \mathcal{I}_{0}$ for each $i = 0,\ldots, N$. Fix $t \in [0,\tau_1]$. Then
  \begin{align*}
   |\phi(0,t,\xi) - \phi^n(0,t,\xi)| &\leq \| \psi(0,\cdot,\chi_0(\cdot,\xi)) - \psi^n (0,\cdot,\chi_0(\cdot,\xi)) \|_{\infty;[0,\tau_1]} \\
   &\quad + \|\psi^n (0,\cdot,\chi_0(t,\xi)) - \psi^n(0,\cdot,\chi^n_0(t,\xi))\|_{\infty;[0,\tau_1]}
  \end{align*}
  The first term converges to $0$ for $n \to \infty$ by \cite[Theorem 11.12]{FV10}. Concerning the second term, we can use Lemma \ref{lemma:prop_flows} to see that
  \begin{align*}
   \|\psi^n (0,\cdot,\chi_0(t,\xi)) - \psi^n(0,\cdot,\chi^n_0(t,\xi))\|_{\infty;[0,\tau_1]} \leq C |\chi_0(t,\xi) - \chi^n_0(t,\xi)|.
  \end{align*}
  Therefore it is enough to show that $\chi^n_0(\cdot,\xi) \to \chi_0(\cdot,\xi)$ as $n \to \infty$ uniformly on $[0,\tau_1]$. From \cite[Theorem 11.12 and Theorem 11.13]{FV10}, we can deduce that $J^n \to J$ and $\psi^n \to \psi$ converge uniformly as $n \to \infty$, thus the claimed convergence follows from Lemma \ref{lemma:uniqueness_ODE}. This proves that 
  \begin{align*}
   \|\phi(0,\cdot,\xi) - \phi^n(0,\cdot,\xi) \|_{\infty;[0,\tau_1]} \to 0
  \end{align*}
  as $n \to \infty$. Now assume that we have shown that
  \begin{align*}
    \|\phi(0,\cdot,\xi) - \phi^n(0,\cdot,\xi) \|_{\infty;[0,\tau_i]} \to 0
  \end{align*}
  for some $i = 1,\ldots,N$. Let $t \in [\tau_i,\tau_{i+1}]$. By the semiflow property,
  \begin{align*}
   |\phi(0,t,\xi) - \phi^n(0,t,\xi)| &= |\phi(\tau_i,t,\phi(0,\tau_i,\xi)) - \phi^n(\tau_i,t,\phi^n(0,\tau_i,\xi))| \\
   &\leq \| \psi(\tau_i,\cdot,\chi_{\tau_i}(\cdot,\phi(0,\tau_i,\xi))) - \psi^n (\tau_i,\cdot,\chi_{\tau_i}(\cdot,\phi(0,\tau_i,\xi))) \|_{\infty;[\tau_i,\tau_{i + 1}]} \\
   &\quad + \| \psi^n(\tau_i,\cdot,\chi_{\tau_i}(\cdot,\phi(0,\tau_i,\xi))) - \psi^n (\tau_i,\cdot,\chi^n_{\tau_i}(\cdot,\phi(0,\tau_i,\xi))) \|_{\infty;[\tau_i,\tau_{i + 1}]} \\
    &\quad + \| \psi^n(\tau_i,\cdot,\chi^n_{\tau_i}(\cdot,\phi(0,\tau_i,\xi))) - \psi^n (\tau_i,\cdot,\chi^n_{\tau_i}(\cdot,\phi^n(0,\tau_i,\xi))) \|_{\infty;[\tau_i,\tau_{i + 1}]}
  \end{align*}
  The first term converges to $0$ again by \cite[Theorem 11.12]{FV10}. The other two terms converge to $0$ if we can show that
  \begin{align*}
   \| \chi_{\tau_i}(\cdot,\phi(0,\tau_i,\xi)) - \chi^n_{\tau_i}(\cdot,\phi(0,\tau_i,\xi)) \|_{\infty;[\tau_i,\tau_{i + 1}]}  &\to 0 \quad \text{and} \\
   \| \chi^n_{\tau_i}(\cdot,\phi(0,\tau_i,\xi)) - \chi^n_{\tau_i}(\cdot,\phi^n(0,\tau_i,\xi)) \|_{\infty;[\tau_i,\tau_{i + 1}]}  &\to 0
  \end{align*}
  as $n \to \infty$. This follows again by using Lemma \ref{lemma:uniqueness_ODE} together with our induction hypothesis. This shows that 
  \begin{align*}
    \|\phi(0,\cdot,\xi) - \phi^n(0,\cdot,\xi) \|_{\infty;[\tau_i,\tau_{i + 1}]} \to 0
  \end{align*}
  as $n \to \infty$, and by induction hypothesis the convergence also holds uniformly on $[0,\tau_{i+1}]$ which finishes the induction. This finally proves uniform convergence on the whole time interval which shows that indeed $t \mapsto \phi(0,t,\xi)$ is a solution in the sense of Friz-Victoir.

\end{proof}

\subsection*{Acknowledgements}
\label{sec:acknowledgements}

SR would like to thank Peter Friz for valuable discussions about flow decompositions for rough differential equations. Financial support by the DFG via Research Unit FOR 2402 is gratefully acknowledged.

\bibliographystyle{alpha}
\bibliography{refs}

\end{document}